\pgfplotsset{%
   every tick label/.append style = {font=\tiny},
   every axis label/.append style = {font=\scriptsize}
}
\numberwithin{equation}{section}
\newtheorem{thm}{Theorem}[section]
\newtheorem*{thm*}{Theorem}
\newtheorem*{conj*}{Conjecture}
\newtheorem{cor}[thm]{Corollary}
\newtheorem{lem}[thm]{Lemma}
\theoremstyle{definition}
\newtheorem{dfn}[thm]{Definition}
\newtheorem{rmk}[thm]{Remark}
\newcommand{\N}{\mathds{N}}
\newcommand{\Z}{\mathds{Z}}
\newcommand{\R}{\mathds{R}}
\newcommand{\T}{\mathds{T}}
\newcommand{\diff}{\mathrm{d}}
\newcommand{\p}{\partial}
\newcommand{\kin}{{\mathrm{kin}}}
\begin{document}

\title[Non-resonant circles]{Non-resonant circles for strong magnetic fields on surfaces}

\author[L. Asselle]{Luca Asselle}
\address{Justus Liebig Universit\"at Giessen, Mathematisches Institut \newline \indent Arndtstrasse 2, 35392 Giessen, Germany}
\email{luca.asselle@math.uni-giessen.de}

\author[G. Benedetti]{Gabriele Benedetti}
\address{Universit\"at Heidelberg, Mathematisches Institut, \newline\indent Im Neuenheimer Feld 205, 69120 Heidelberg, Germany}
\email{gbenedetti@mathi.uni-heidelberg.de}

\subjclass[2000]{37J99, 58E10}
\keywords{Magnetic systems, KAM tori, periodic orbits, trapping regions.}

\begin{abstract} We study non-resonant circles for strong magnetic fields on a closed, connected, oriented surface and show how these can be used to prove the existence of trapping regions and of periodic magnetic geodesics with prescribed low speed. As a corollary, there exist infinitely many periodic magnetic geodesics for every low speed in the following cases: i) the surface is not the two-sphere, ii) the magnetic field vanishes somewhere. 
\end{abstract}

\maketitle

\section{Introduction}
\subsection{The setting}
Let $M$ be a closed, connected, oriented surface. A magnetic system on $M$ is a pair $(g,b)$, where $g$ is a Riemannian metric on $M$ and $b:M\to\R$ is a function, which we refer to as the magnetic field. A magnetic system gives rise to a second-order differential equation
\begin{equation}\label{e:ODE}
\nabla_{\dot\gamma}\dot\gamma=b(\gamma)\dot\gamma^\perp,
\end{equation}
for curves $\gamma:\R\to M$. Here $\nabla$ is the Levi-Civita connection of $g$ and $\perp$ denotes the rotation of a tangent vector by ninety-degree in the positive direction. Equation \eqref{e:ODE} is Newton's second law for a charged particle on the surface $M$ under the effect of the Lorentz force induced by the magnetic field $b$ \cite{Arn61,AS67}. 

Solutions of \eqref{e:ODE}, also known as magnetic geodesics, have constant speed $s$. Moreover, a curve with constant speed $|\dot\gamma|\equiv s>0$ is a solution of \eqref{e:ODE} if and only if it satisfies the prescribed curvature equation
\begin{equation}\label{e:kappa}
\kappa_\gamma=\frac{b(\gamma)}{s},
\end{equation}
where $\kappa_\gamma:\R\to\R$ denotes the geodesic curvature of $\gamma$ \cite{Arn61,AS67}. From \eqref{e:kappa} we see that the geometric properties of $\gamma$ will depend on the size of the quantity $b/s$. For instance, if we are in the regime of weak magnetic fields, namely if $s$ is large with respect to $b$, the solutions of \eqref{e:ODE} approximate the geodesics of $g$.

In this paper, we are interested in the opposite regime of strong magnetic fields, namely when $s$ is small with respect to $b$. In particular, we will look at the existence of periodic orbits and of trapping regions for the motion when the speed $s$ lies in this regime. Periodic orbits and trapping regions play a crucial role for their applications to the dynamics of charged particles and have been the subject of a vast literature in the last decades. Periodic orbits are usually detected by variational methods (see for instance \cite{Taimanov:1992fs,Ginzburg:1994,Benedetti:2016b} and the discussion below for further references) and are used to study the presence of stable and chaotic trajectories (see \cite{Abreu:2017,Miranda:2007}). Trapping regions yield confinement of particles, a phenomenon which is observed in van Allen belts \cite{Braun}, exploited to build plasma fusion devices \cite{HM} and is mainly studied for magnetic systems on flat three-dimensional space \cite{Truc,Peralta}.

In order to state our main results we need two more pieces of notation and a definition. We denote by $\mu$ the area form of $g$ with respect to the given orientation and by $K:M\to\R$ the Gaussian curvature of $g$.
\begin{dfn}\label{d:non}
A magnetic system $(g,b)$ with $b\not\equiv 0$ is called \textit{resonant at zero speed} if either
\begin{enumerate}
\item both $b$ and $K$ are constant, or
\item $M=S^2$ and one of the following two conditions holds:
\begin{enumerate}[(i)]
\item $b$ is a non-constant, nowhere vanishing function such that the Hamiltonian flow of the function $b^{-2}$ on the symplectic manifold $(S^2,\mu)$ is fully periodic, or
\item $b$ is a non-zero constant, $K$ is non-constant and the Hamiltonian flow of the function $K$ on the symplectic manifold $(S^2,\mu)$ is fully periodic.
\end{enumerate}
\end{enumerate}
\end{dfn}

\begin{rmk}
Fully periodic Hamiltonian flows on $(S^2,\mu)$ are completely classified, up to a change of coordinates. They are all isomorphic to the Hamiltonian flow induced by a constant multiple of the height function of the euclidean unit sphere in $\R^3$ endowed with a constant multiple of the euclidean area form. In particular, the Hamiltonian function possesses exactly two critical points which are additionally non-degenerate. A criterion that prevents a Hamiltonian flow on $(S^2,\mu)$ to be fully periodic is given in \cite[Theorem 1.7]{AB21}.  \qed
\end{rmk}

\begin{rmk}
Given a metric on $S^2$, one can always construct a function $b$ satisfying (i) in Definition~\ref{d:non}. To our best knowledge, it is not known whether examples of metrics whose curvature $K$ satisfies (ii) exist or not. In Theorem~\ref{thm:curvaturenonresonant} we show that there is no such metric of revolution. In \eqref{e:Kres0} we give a geometric characterization of such metrics, and in \eqref{e:Kres} a simple necessary condition that their curvature must satisfy. \qed
\end{rmk}


\subsection{The results}\label{ss:main}
Our main result asserts that magnetic systems which are not resonant at zero speed possess an abundance of periodic orbits and trapping regions when the speed $s$ is small. Here is the precise statement.
\begin{thm}\label{t:main}
Let $(g,b)$ be a magnetic system on a closed, connected, oriented surface $M$ with $b\not\equiv0$. Suppose that $(g,b)$ is not resonant at zero speed. Then there exists an embedded circle $L\subset \{b\neq 0\}$ such that for all neighborhoods $U$ of $L$ there exist $s_*>0$ and a neighborhood $U'$ of $L$ contained in $U$ with the property that for all $s<s_*$ there holds:
\begin{enumerate}
\item there exist infinitely many periodic magnetic geodesics with speed $s$ contained in $U$ and homotopic to some multiple of the free-homotopy class of $L$,
\item every magnetic geodesic with speed $s$ starting in $U'$ stays in $U$ for all positive and negative times.
\end{enumerate}
\end{thm}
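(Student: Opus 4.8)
The plan is to extract from the hypothesis an embedded circle $L\subset\{b\neq0\}$ that is \emph{dynamically non-resonant}, and then to analyse the speed-$s$ magnetic geodesic flow in a neighbourhood of $L$ in the strong-field limit $s\to0$ via an adiabatic normal form, KAM theory, and the Poincar\'e--Birkhoff theorem.

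\emph{Step 1: the profile function and the circle $L$.} After the strong-field rescaling of time, the speed-$s$ magnetic flow on $SM$ converges as $s\to0$ to the fibrewise (Larmor) rotation, a free $S^1$-action with orbit space $M$, and it is an $O(s)$-perturbation of it when $s$ is small. Averaging over the Larmor phase (Birkhoff normalisation with respect to this $S^1$-action) produces a \emph{guiding-centre Hamiltonian} on $M$ whose $\mu$-Hamiltonian flow describes the slow drift of the Larmor centre; to leading non-trivial order this drift is generated by a non-zero constant multiple of the $\mu$-Hamiltonian vector field $X_f$ of the \emph{profile function} $f$, where $f:=b^{-2}$ on $\{b\neq0\}$ when $b$ is non-constant and $f:=K$ when $b$ is a non-zero constant --- precisely the functions occurring in Definition~\ref{d:non}. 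I would take $L$ to be a connected component of a regular level set $\{f=c\}$ of $f$ for which the period function $T$ of $X_f$ satisfies $T'(c)\neq0$; note that such an $L$ is automatically contained in $\{b\neq0\}$. A value $c$ with $T'(c)\neq0$ exists whenever $(g,b)$ is not resonant at zero speed: if $b$ is non-constant then $f=b^{-2}$ is non-constant on every connected component of $\{b\neq0\}$ (a component $W\neq M$ has non-empty boundary contained in $\{b=0\}$, so $b$ cannot be a non-zero constant on it), and a short case analysis --- distinguishing whether $\{b\neq0\}=M$ or not, and using Reeb's theorem on surfaces of positive genus, the classification of fully periodic flows on $(S^2,\mu)$ recalled above, and the facts that $T$ blows up near separatrices of $X_f$ while $T\to0$ near the zeros of $b$ --- shows that $T$ is non-constant, hence that $T'\neq0$ somewhere; the remaining case $b\equiv\mathrm{const}\neq0$, in which $K$ is necessarily non-constant, is treated identically with $f=K$. (The definition of resonance at zero speed is tailored to make this dichotomy sharp.)

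\emph{Steps 2--3: adiabatic normal form, KAM, trapping, and periodic orbits.} Fix a neighbourhood $U$ of $L$ with $\overline{U}\subset\{b\neq0\}$, so small that $f$ is regular and $T'\neq0$ on $U$; then the level curves of $f$ foliate $U$ transversally to $X_f$ and give ``drift action--angle'' coordinates on $U$. For $s$ small, I would put the speed-$s$ flow over $U$ into an adiabatic normal form: with the Larmor phase and the rescaled magnetic moment $I$ (an adiabatic invariant, $O(s^2)$-small) as the fast conjugate pair and the drift action--angle as the slow pair, repeated averaging over the Larmor phase reduces the system, up to a remainder $O(s^N)$ for any prescribed $N$, to an integrable twist system on a domain of $T^*\T^2$; after the rescaling the twist --- monotone dependence of the Poincar\'e rotation number on the action --- amounts exactly to $T'\neq0$ on $U$, which is where Step~1 enters. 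Moser's twist-map theorem (equivalently, KAM) then gives, for $s$ small, a family of invariant $2$-tori of relative measure close to $1$ inside the energy level $\{v\in TM:|v|_g=s\}$ over $U$, each $C^0$-close to and isotopic to the $2$-torus $\{v\in TM:\pi(v)\in L,\ |v|_g=s\}$; being graphs, these tori foliate a neighbourhood of that torus in the $3$-dimensional energy level, and any two of them cobound an invariant compact region. Choosing a fixed thin tubular neighbourhood $U'$ of $L$ in $M$ and then $s_*>0$ small enough, for every $s<s_*$ there are two such tori whose region projects to an annular neighbourhood of $L$ contained in $U$ and containing $U'$; hence every speed-$s$ magnetic geodesic issuing from $U'$ is trapped between them for all time, which is assertion~(2). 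Finally, between these two tori the Poincar\'e return map is an area-preserving monotone twist map of a closed annulus with invariant boundary circles of distinct rotation numbers, so by the Poincar\'e--Birkhoff theorem (or Aubry--Mather theory) it possesses periodic points of every rational rotation number in the intervening interval; these correspond to infinitely many geometrically distinct periodic magnetic geodesics of speed $s$, all contained in $U$ and freely homotopic to multiples of the free-homotopy class of $L$, which is assertion~(1).

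\emph{Main difficulty.} The heart of the proof is the adiabatic normal form: performing it uniformly on a whole neighbourhood of $L$, identifying the profile function and the twist correctly, and controlling the remainder within KAM's threshold --- in particular handling within a single scheme the two regimes ($b$ non-constant, where $f=b^{-2}$ already enters the drift at order $s^2$, versus $b$ constant, where the drift is governed by $K$ only at the next order), and arranging that the construction degenerates compatibly as $s\to0$ so that $U'$ and $s_*$ can be chosen as the statement requires. By comparison, Step~1 is an elementary, if somewhat fiddly, case analysis, and once the normal form is in hand the KAM and Poincar\'e--Birkhoff inputs are standard.
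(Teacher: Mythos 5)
Your proposal follows the same overall strategy as the paper: identify the profile function ($b^{-2}$ with area form $\mu$ when $b$ is non-constant, or $K$ when $b$ is a non-zero constant, cf.\ Remark~\ref{r:bb}), choose $L$ to be a regular level circle on which the period derivative $T'$ is non-zero, pass to a Hamiltonian normal form near $L$ in the limit $s\to0$, use the Moser twist theorem to build invariant tori that trap trajectories, and invoke Poincar\'e--Birkhoff between two such tori to produce infinitely many periodic orbits. Your Steps~2--3 are precisely Theorem~\ref{t:Moser}, applied via the normal form \eqref{e:H1}--\eqref{e:H2} of \cite{Castilho:2001,AB21}; the only presentational difference is that the paper trivializes the Larmor phase explicitly, reducing to a non-autonomous planar Hamiltonian $H_s$ with a finite-order remainder ($O(s^3)$ resp.\ $O(s^5)$), rather than carrying out $O(s^N)$-for-all-$N$ averaging on a four-dimensional fast/slow phase space, and a finite-order expansion is all that is needed.

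The place where your sketch genuinely diverges from the paper is Step~1, and this is in fact where the paper invests most of its technical effort. You argue for the existence of a non-resonant circle by appealing to the behaviour of $T$ near separatrices and near $\{b=0\}$, Reeb's theorem, and the classification of circle actions on $S^2$; the paper instead develops the orbit-cylinder machinery of Section~\ref{s:nonresonant}. The key input there is Lemma~\ref{l:main}: on a finite-area symplectic surface, a \emph{resonant} forward maximal orbit cylinder necessarily has $c_+<\infty$ and closes up to an embedded disc terminating at a \emph{non-degenerate} maximum of the Hamiltonian, so that the flow is a circle action there. Theorem~\ref{t:dichotomy} then gives the clean dichotomy --- either every maximal cylinder carries a non-resonant circle, or $N\cong S^2$ and $H$ generates a global Hamiltonian circle action --- and Definition~\ref{d:non} is tailored to preclude the second alternative. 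Your sketch can no doubt be made rigorous, but it leans implicitly on the finite-area constraint (your ``$T\to0$ near the zeros of $b$'' is Lemma~\ref{l:ap} together with $\int_N\omega<\infty$), and in the case $M=S^2$, $b>0$ the classification of circle actions alone does not rule out a resonant maximal cylinder terminating at a \emph{degenerate} critical point: excluding that possibility is exactly the content of Lemma~\ref{l:main}. The cylinder approach also pays for itself by yielding the refined location statements of Theorem~\ref{t:main2} and Theorems~\ref{t:unbounded}--\ref{t:ultimo} with no additional work.
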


The theorem is proved using a normal form for the flow $\Phi_s$ on $S_sM:=\{(q,v)\in TM\ |\ |v|=s\}$ arising from the tangent lifts of magnetic geodesics with small speed $s$. For the vector field generating $\Phi_s$, the normal form was established by Arnold in \cite{Arn97} and used to show that the magnetic field $b$, or the Gaussian curvature $K$ if $b$ is constant, is an adiabatic invariant for strong magnetic fields \cite[Example 6.29]{AKN}. For the applications to trapping regions and periodic orbits, we need an upgrade of the normal form from the vector field to the Hamiltonian structure inducing the flow $\Phi_s$. This Hamiltonian normal form was established in \cite{Castilho:2001} when $b$ is not constant and in \cite{AB21} when $b$ is constant. We refer to Section~\ref{s:app} for more details and to \cite{BS94} for a related normal form in flat three-dimensional space.
 
The circle $L$ in the statement of Theorem \ref{t:main} is any connected component of a regular level set of the magnetic field $b$, or of the Gaussian curvature $K$ if $b$ is constant, at which the first order of the Hamiltonian normal form is non-resonant (see Section~\ref{ss:res} for a precise definition). The Moser twist theorem \cite{Mos62}, a manifestation in this setting of the celebrated KAM theorem, implies the existence of nearby $\Phi_s$-invariant two-tori inside $S_sM$ filled with quasi-periodic 
motions. Considering two of these tori lying to the left and to the right of the circle $L$, one can trap trajectories starting inside a small neighborhood of $L$. Moreover, $\Phi_s$ admits a closed annulus-like surface of section in the region between the two tori. The corresponding first-return map is twist by the non-resonance condition and the existence of infinitely many periodic orbits follows by the Poincar\'e--Birkhoff theorem \cite{Gole:2001vx}.

If $K$ and $b\not\equiv0$ are constant, then all magnetic geodesics are periodic for $s$ small and every embedded circle in $M$ has the properties stated in Theorem~\ref{t:main}. Thus the only instance where the existence of an embedded circle with those properties is not known is case (2) in Definition~\ref{d:non}. Moreover, in many situations we can give additional information on the position of such an embedded circle.
\begin{thm}\label{t:main2}
Let $(g,b)$ be a magnetic system on $M$. Suppose that one of the following conditions is satisfied:
\begin{enumerate}[(i)]
\item $b\not\equiv 0$ and the set $\{b=0\}$ is non-empty;
\item the function $b$ has a degenerate strict local maximum or minimum $z_0\in\{b\neq0\}$;
\item the function $b$ attains a strict local maximum or minimum at an embedded circle $L'\subset\{b\neq0\}$.
\end{enumerate}
Then there exists a fundamental system $\{U_i\}_{i\in \N}$ of neighborhoods of $\{b=0\}$, respectively of $z_0$ or $L'$, such that $\p U_i$ is the union of embedded circles satisfying the properties of Theorem~\ref{t:main} for all $i\in\N$. In particular, each of the sets $\{b=0\}$, $z_0$ and $L'$ is stable in the sense of Lyapunov.

If in (ii) and (iii) we additionally assume that $z_0$ or $L'$ are isolated in the set of critical points of $b$, then $U_i$ can be taken to be a disc around $z_0$ or a tubular neighborhood of $L'$.

Same conclusions as above hold if $K$ satisfies the analogous of conditions (ii) and (iii) and $b$ is a positive constant in a neighborhood of $z_0$ and $L'$.
\end{thm}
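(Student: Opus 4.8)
The plan is to produce, accumulating onto the prescribed set --- write $Z$ for $\{b=0\}$, $\{z_0\}$, or $L'$ --- a family of embedded circles, each a connected component of a regular level set of $b$ (of $K$ in the last variant) at which the first order of the Hamiltonian normal form is non-resonant, and then to carve out the $U_i$ along these circles. By Section~\ref{ss:res}, non-resonance of a component $L$ of a regular level set $\{b=c\}$ is an open condition on $c$ that fails only on an exceptional set $\mathcal R\subset\R$, and $\mathcal R$ has Lebesgue measure zero inside any interval of regular values on which the first-order rotation function $\Theta$ --- the ratio of the drift and Larmor frequencies of the first-order truncation, as a function of the level value $c$ --- is nowhere locally constant; in particular, $\mathcal R$ contains no one-sided neighbourhood of a value $c_*$ near which $\Theta$ fails to be locally constant. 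Combined with Sard's theorem, it therefore suffices in each case to show that $\Theta$ --- governed by the drift flow along the level sets of $b$, i.e.\ the Hamiltonian flow of $b^{-2}$ on $\{b\neq 0\}$, and by $K$ in the last variant --- is non-constant on one-sided neighbourhoods of the relevant value $c_*=b|_Z$; then non-resonant regular values $c$ accumulate at $c_*$, and for almost every such $c$ every component of $\{b=c\}$ sufficiently close to $Z$ is a non-resonant circle.

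This non-constancy is exactly where the three hypotheses enter. In case (i), as $c\to 0$ the level sets $\{b=c\}$ accumulate onto $\{b=0\}$, on which the flow of $b^{-2}$ degenerates: its speed blows up like $|b|^{-3}|\nabla b|$, so the drift period along $\{b=c\}$ collapses, while the Larmor frequency $\propto b/s$ varies linearly across the levels; hence $\Theta$ is not locally constant near $c_*=0$. (Here $b\not\equiv 0$ vanishes somewhere, so $b$ is non-constant and this is genuinely the $b$-normal form.) In case (ii), $c_*=b(z_0)\neq 0$, and $z_0$, being a degenerate critical point of $b$, is a degenerate equilibrium of the drift flow, whose linearisation is nilpotent rather than an infinitesimal rotation; since the strict extremum forces the nearby level sets of $b$ to be closed drift orbits encircling $z_0$, their periods tend to $+\infty$ as $c\to c_*$, so $\Theta\to 0$ and is non-constant near $c_*$. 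In case (iii), $b$ is constant along $L'$ and attains there a strict extremum transversally, so every point of $L'$ is a critical point of $b$; the drift vanishes identically on $L'$, and the nearby closed drift orbits encircling $L'$ again have periods $\to+\infty$ as $c\to c_*$, forcing $\Theta$ non-constant near $c_*$.

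Next I would fix, via Sard's theorem and the above, a strictly decreasing sequence of non-resonant regular values --- $\delta_i\downarrow 0$ in case (i), and $\epsilon_i\downarrow 0$ with $c_*$ approached from the side where $b<c_*$ (resp.\ $b>c_*$) near $Z$ in cases (ii)--(iii) --- and set $U_i:=\{|b|<\delta_i\}$ in case (i), and $U_i:=$ the connected component of $\{b>c_*-\epsilon_i\}$ (resp.\ $\{b<c_*+\epsilon_i\}$) containing $Z$ in cases (ii)--(iii). Compactness of $M$ together with the strictness of the extrema gives at once that $\{U_i\}$ is a fundamental system of neighbourhoods of $Z$, that $\overline{U_i}$ is eventually contained in any prescribed neighbourhood of $Z$, and that $\partial U_i$ is a finite union of connected components of the regular level sets $\{b=\pm\delta_i\}$ (resp.\ $\{b=c_*\mp\epsilon_i\}$) --- hence a union of embedded non-resonant circles, which by construction satisfy the conclusions of Theorem~\ref{t:main}. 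For Lyapunov stability, given a neighbourhood $W\supseteq Z$ I would pick $i$ with $\overline{U_i}\subseteq W$: the proof of Theorem~\ref{t:main} provides, for all $s$ below a threshold depending only on $i$, a $\Phi_s$-invariant $2$-torus in $S_sM$ over a circle lying strictly between $\partial U_{i+1}$ and each component of $\partial U_i$; these tori jointly bound a $\Phi_s$-invariant region of $S_sM$ containing $S_sM|_{U_{i+1}}$ and contained in $S_sM|_{U_i}$, so every magnetic geodesic of small speed starting over $U_{i+1}$ stays over $U_i\subseteq W$.

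For the addenda: if $z_0$ (resp.\ $L'$) is isolated in $\crit(b)$, then on a punctured tubular neighbourhood the gradient flow of $b$ has no equilibrium and moves monotonically towards $z_0$ (resp.\ $L'$), and the usual deformation-retract argument makes the component $U_i$ above an open disc (resp.\ a tubular neighbourhood of $L'$) once $\epsilon_i$ is small. When $b$ is a positive constant on a neighbourhood $N$ of $z_0$ (resp.\ $L'$), the first-order normal form on $N$ is the one governed by the Gaussian curvature $K$ rather than by $b$ (as in Definition~\ref{d:non}(2)(ii) and Section~\ref{ss:res}), so the whole argument runs verbatim with $K$ in place of $b$ --- the drift being the Hamiltonian flow of $K$ on $(N,\mu)$ --- and the neighbourhoods produced lie in $N$ for $i$ large. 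I expect the main obstacle to be the non-constancy of $\Theta$ near $c_*$ in the second step: for (ii) and (iii) it rests on the fact that a degenerate, respectively non-isolated, equilibrium of an area-preserving flow is encircled by closed orbits of unbounded period, and for (i) on the blow-up of the drift speed near the pole set $\{b=0\}$; the genuinely delicate part is to control the first-order rotation function close enough to the degenerate level to pass from ``non-constant'' to ``$\mathcal R$ omits a one-sided neighbourhood of $c_*$'', which is exactly what the prescribed degeneracy at $Z$ is there to guarantee --- and what would fail at a non-degenerate isochronous extremum.
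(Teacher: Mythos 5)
Your overall strategy — accumulate non-resonant circles onto $Z$ and bound the $U_i$ by them — is in the right spirit, but it deviates from the paper's route in ways that create genuine gaps.

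The paper does not use any measure-theoretic or genericity argument on the resonant set. Its crucial tool is Lemma~\ref{l:main}: if a maximal orbit cylinder is \emph{resonant} (the period $T$ is constant along it), then it closes up onto a \emph{non-degenerate} local maximum or minimum of $H$, because a constant-period cylinder produces a Hamiltonian circle action whose fixed point is non-degenerate by \cite[Lemma~5.5.8]{McDuff:1998uu}. The degeneracy hypotheses in (i)--(iii) of Theorem~\ref{t:main2} then translate, through $H=b^{-1}$ or $H=K$ and the propriety/unboundedness structure, into the impossibility of a cylinder approaching $Z$ being resonant. Your replacement of this with "$\mathcal R$ has Lebesgue measure zero on intervals where $\Theta$ is nowhere locally constant" is false: for a merely smooth $T$, the set $\{T'=0\}$ can have empty interior yet positive measure (take $T'$ to be a smooth non-negative bump vanishing on a fat Cantor set). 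What you actually need — and what the paper delivers — is just that $T$ is non-constant on every one-sided neighborhood of $c_*$, whence by the mean value theorem $T'\neq 0$ somewhere in every such neighborhood. Your stronger claim that periods blow up (``$T(c)\to+\infty$'') near a degenerate extremum is plausible but is never proved, and the paper deliberately avoids proving it, using instead the weaker and cleaner contrapositive of Lemma~\ref{l:main}.

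The second gap is structural: you define $U_i$ as sublevel sets $\{|b|<\delta_i\}$ or components of $\{b\gtrless c_*\mp\epsilon_i\}$, requiring a single level value at which \emph{every} boundary component is simultaneously non-resonant. But distinct components of $\{b=c\}$ lie on distinct orbit cylinders with independent period functions, whose critical sets have no reason to avoid a common value. The paper's Theorems~\ref{t:unbounded}, \ref{t:z-}, \ref{t:L'} handle this precisely by \emph{not} taking $\partial V$ at a single level: they start from a compact slice $\{H\leq c\}\cap U$, follow each backward/forward maximal cylinder separately, excise the closing discs $D_i$ from resonant cylinders (which by Lemma~\ref{l:main} sit strictly away from $Z$), and truncate the non-resonant cylinders at levels chosen individually for each component. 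The resulting $V$ has boundary circles at different levels of $H$, and a short connectedness argument rules out the bad case $S=\{1,\dots,k\}$. Your construction would need this machinery, or an independent argument that a common non-resonant level exists; neither is supplied. The final assembly into Theorem~\ref{t:main2} is then just an application of Theorem~\ref{t:three} (the Moser/Poincaré--Birkhoff package) via the normal forms \eqref{e:H1}, \eqref{e:H2} and Remarks~\ref{r:bb}, \ref{r:psis}, which is what the paper's one-line proof invokes.
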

\begin{rmk}
Case (ii) in Theorem~\ref{t:main2} under the hypotheses that $z_0$ is isolated in the set of critical points of $b$ was treated in \cite[Corollary 1.5]{AB21}. Case (iii) under the hypothesis that $L'$ is isolated in the set of critical points of $b$ was treated in \cite[Corollary 1.3]{Castilho:2001}. On the other hand, notice that \cite[Corollary 1.2]{Castilho:2001} about a non-degenerate local maximum or minimum point of $b$ is not correct since such a point might be resonant.\qed
\end{rmk}

\begin{rmk}\label{r:saddle}
The presence of a non-degenerate saddle point $z_1\in M$ for $b$ with $b(z_1)\neq0$ also forces the existence of embedded circles as in Theorem~\ref{t:main} intersecting the four components of $\{b\neq b(z_1)\}$ in an arbitrarily small neighborhood of $z_1$. A partial result in this direction was obtained in \cite[Theorem 1.6]{AB21}. As a consequence, trajectories starting close to $z_1$ can only escape along the four branches of $\{b=b(z_1)\}$. We refer to Theorem~\ref{t:saddlemag} for the precise formulation of the result. A similar statement holds if $b$ is a non-zero constant and $z_1$ is a non-degenerate saddle point of $K$. \qed
\end{rmk}

We end this introduction discussing the problem of existence of periodic magnetic geodesics for strong magnetic fields in more detail. We start by observing that Theorem~\ref{t:main} has the following consequence.

\begin{cor}\label{c:nons2}
Let $(g,b)$ be a magnetic system with $b\not\equiv 0$. If $M\neq S^2$ or $b$ vanishes somewhere, then there is $s_*>0$ such that there are infinitely many 
periodic magnetic geodesics with speed $s$ for every $s\in (0,s_*)$.\qed
\end{cor}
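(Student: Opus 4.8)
The plan is to reduce to Theorem~\ref{t:main} whenever the magnetic system is not resonant at zero speed, and to handle the complementary case by an elementary direct argument.

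The first step is to check that, under the hypothesis of the corollary, the only way $(g,b)$ can be resonant at zero speed is that both $b$ and $K$ are constant. Indeed, assume $(g,b)$ is resonant at zero speed and run through Definition~\ref{d:non}. If alternative~(1) holds we are in the asserted situation. If alternative~(2) holds then $M=S^2$, so by hypothesis $b$ must vanish somewhere; but in case~(2)(i) the function $b$ is nowhere vanishing and in case~(2)(ii) it is a non-zero constant, so in either case $b$ does not vanish --- a contradiction. Hence alternative~(2) is impossible.

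Consequently, if $(g,b)$ is not resonant at zero speed, Theorem~\ref{t:main} applies and yields an embedded circle $L$, a threshold $s_*>0$, and for every $s\in(0,s_*)$ infinitely many periodic magnetic geodesics of speed $s$; this proves the corollary in this case. It remains to treat the case in which $b\equiv\beta$ and $K\equiv k$ are constant with $\beta\neq0$. Since $b$ does not vanish, the hypothesis of the corollary forces $M\neq S^2$, and then Gauss--Bonnet gives $k\leq0$. Here I would not invoke Theorem~\ref{t:main} at all, arguing instead as in the discussion following Theorem~\ref{t:main2}: by \eqref{e:kappa} a speed-$s$ magnetic geodesic is a curve of constant geodesic curvature $\beta/s$ traversed at speed $s$, and lifting to the universal cover $\widetilde M$ --- a simply connected space form of curvature $k\leq0$ --- such curves are, for $s$ sufficiently small (so that $|\beta|/s>\sqrt{-k}$), metric circles whose radius tends to $0$ as $s\to0$. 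For $s$ small their diameter drops below the injectivity radius of the compact surface $M$, so each projects to an embedded closed curve in $M$; these are periodic magnetic geodesics of speed $s$, and since they are parametrized by their center there are infinitely many geometrically distinct ones. Taking $s_*$ small enough for this to hold finishes the proof.

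There is essentially no technical obstacle here: the whole weight of the corollary rests on Theorem~\ref{t:main}. The only point that genuinely requires attention is the bookkeeping with Definition~\ref{d:non} carried out in the first step --- verifying that ``$M\neq S^2$ or $b$ vanishes somewhere'' rules out every resonant configuration except the one with $b$ and $K$ simultaneously constant --- together with the easy but necessary observation that in that remaining configuration all sufficiently slow magnetic geodesics are closed.
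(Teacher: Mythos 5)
Your proof is correct and follows essentially the same route the paper intends: rule out alternative (2) of Definition~\ref{d:non} under the hypotheses, invoke Theorem~\ref{t:main} in the non-resonant case, and handle the remaining case of constant $b$ and $K$ via the observation (stated just before Theorem~\ref{t:main2}) that all sufficiently slow magnetic geodesics are then closed. The only thing you add is the explicit universal-cover justification (small metric circles in a space form of curvature $k\le 0$) of that last observation, which the paper merely asserts.
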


Let us examine how Corollary~\ref{c:nons2} complements previous results in the literature. When $b\not\equiv 0$, the existence of one periodic magnetic geodesic for all low speeds follows from \cite{Ginzburg:1987lq} (see also \cite{AB21}). If the function $b$ assumes both positive and negative values, which happens for instance if the two-form $b\mu$ is exact, then the existence of infinitely many periodic magnetic geodesics for \textit{almost every} speed $s<\tilde s$ was proved in the series of papers \cite{Abbondandolo:2017,Abbondandolo:2015lt,Abbondandolo:2014rb,Asselle:2015ij,Asselle:2015hc} by employing the variational characterization of periodic solutions to \eqref{e:ODE} as critical points of the free-period action functional. The value $\tilde s$ can be explicitly characterized and, when $b\mu$ is exact, $\tfrac12{\tilde s}^2$ coincides with the Ma\~n\'e critical value of the universal cover.
These methods have failed so far to yield existence of infinitely many periodic orbits for every small speed because of the lack of good compactness properties of the functional (namely, the lack of the Palais--Smale condition) and the existence for almost every speed is recovered by means of Struwe's monotonicity argument \cite{Struwe:1990sd}.

Our results upgrade the almost everywhere existence results to \textit{every} low speed, even though a priori only on a smaller speed range since we do not have an explicit estimate on the value $s_*$ appearing in Corollary~\ref{c:nons2}. Thus the existence of infinitely many periodic orbits for $s<\tilde s$ remains an open problem (the existence of one periodic orbit being known by \cite{Taimanov:1992fs,Contreras:2004lv}). We shall notice that the almost everywhere existence results extend to the more general class of Tonelli Lagrangians, i.e.~fibrewise strictly convex and superlinear Lagrangians, see \cite{Asselle:2016qv}. In contrast, the arguments of the present paper seem to carry over to the Tonelli setting only if 
the energy function of the Lagrangian attains a Morse--Bott minimum at the zero section of $TM$.

If $b$ is nowhere vanishing, the existence of infinitely many contractible periodic magnetic geodesics for all low speeds was proven for $M=\T^2$ in \cite{Ginzburg:2009} and for $M$ with genus at least $2$ in \cite{Ginzburg:2014ws} in the context of the Conley conjecture \cite{Ginzburg:2015}. In contrast, the periodic orbits found in Corollary~\ref{c:nons2} are all homotopic to the iterate of some free-homotopy class but in general will not be contractible. If $M=S^2$ and $b$ is nowhere vanishing, for every low speed there are either two or infinitely many periodic magnetic geodesics \cite{Benedetti:2014}. Examples with exactly two periodic orbits at a single speed were constructed in \cite{Benedetti:2016}. However, it is still an open problem to determine if magnetic systems on $S^2$ which are resonant at zero speed have infinitely many periodic orbits at every low speed or at least for a sequence of speeds converging to zero. The latter option holds for rotationally symmetric magnetic systems as we observe in Corollary~\ref{c:rotinf}.

\subsection*{Organization of the paper} The next sections are organized as follows:
\begin{itemize}
\item In Section~\ref{s:nonresonant} we discuss the existence problem of non-resonant circles for autonomous Hamiltonian systems on a surface with finite area. The main technical result of this section is Lemma~\ref{l:main}.
\item In Section~\ref{s:Moser} we prove Theorem~\ref{t:Moser} which combines the Moser twist theorem and the Poincar\'e--Birkhoff theorem to produce trapping regions and infinitely many periodic orbits close to a non-resonant circle for small non-autonomous perturbations of autonomous Hamiltonian systems on surfaces.
\item In Section~\ref{s:app}, we apply the abstract results of the previous section to magnetic systems using the normal form of \cite{Castilho:2001,AB21} and prove Theorems~\ref{t:main} and~\ref{t:main2}.
\item In Section~\ref{s:resonant} we discuss some results about magnetic systems on $S^2$ which are resonant at zero speed.
\end{itemize}

\subsection*{Acknowledgments} L.A.~is partially supported by the Deutsche Forschungsgemeinschaft under the DFG-grant 380257369 (Morse theoretical methods in Hamiltonian dynamics). G.B.~is partially supported by the Deutsche Forschungsgemeinschaft 
under Germany's Excellence Strategy
EXC2181/1 - 390900948 (the Heidelberg STRUCTURES Excellence Cluster), under the Collaborative Research Center SFB/TRR 191 - 281071066 (Symplectic Structures in Geometry, Algebra and Dynamics) and under the Research Training Group RTG 2229 - 281869850 (Asymptotic Invariants and Limits of Groups and Spaces). We are grateful to Viktor Ginzburg and Leonardo Macarini for their comments on the preliminary version of the manuscript.


\section{Non-resonant circles for autonomous systems on symplectic surfaces}
\label{s:nonresonant}

In this section $N$ will denote a connected surface without boundary, not necessarily compact. Moreover, $\omega$ will denote a symplectic form on $N$ and $H:N\to\R$ a proper Hamiltonian function. A set $U\subset N$ is said to be a neighborhood of $\{H=+\infty\}$, if $H$ is bounded from above on $N\setminus U$. A similar property defines neighborhoods of $\{H=-\infty\}$. We denote by $X_H$ the associated Hamiltonian vector field determined by the equation $\omega(X_H,\cdot)=-\diff H$.
\subsection{Orbit cylinders} In this first subsection, we introduce orbit cylinders which will be the main protagonist of the crucial Lemma~\ref{l:main} in the next subsection.
\begin{dfn}
Let $\gamma:\R\to N$ be a non-constant periodic orbit for $H$. We call the support $L$ of $\gamma$, namely $L=\gamma(\R)$, an \textit{embedded circle} of $H$ at level $c:=H(\gamma)$.
\end{dfn}
\begin{rmk}
Since $H$ is a first integral of motion and it is proper, its embedded circles are exactly the regular connected components of the level sets of $H$. \qed
\end{rmk}
\begin{dfn}
An \textit{orbit cylinder} (of $H$) is a smooth isotopy $c\mapsto L_c\subset N$ parametrized over an interval $I\subset\R$ such that $L_c$ is an embedded circle of $H$ at level $c$ for every $c\in I$. We say that an orbit cylinder $c\mapsto L_c$ passes through an embedded circle $L$ of $H$ if $L=L_{c_0}$ for some $c_0$ in the interior of $I$.
\end{dfn}
\begin{rmk}\label{r:imp}
By the implicit function theorem, for every embedded circle $L$ of $H$ at level $c_0$ there exists a unique orbit cylinder with $I=(c_0-\epsilon,c_0+\epsilon)$ for small $\epsilon>0$ passing through it. \qed
\end{rmk}
\begin{dfn}
We say that an orbit cylinder $c\mapsto L_c$ is bigger than another orbit cylinder $c\mapsto L'_c$, if the former isotopy extends the latter. We call an orbit cylinder \textit{maximal} if it is maximal with respect to such partial order. 
If $c\mapsto L_c$ is a maximal cylinder passing through $L=L_{c_0}$, we call the restriction of $c\mapsto L_c$ to $I\cap[c_0,+\infty)$ the \textit{maximal forward orbit cylinder} starting at $L=L_{c_0}$. Analogously, we call the restriction of $c\mapsto L_c$ to $I\cap(-\infty,c_0]$ the \textit{maximal backward orbit cylinder} ending at $L=L_{c_0}$.
\end{dfn}
\begin{lem}\label{l:emb}
Let $L$ be an embedded circle of $H$. Then there exists a unique maximal orbit cylinder passing through $L=L_{c_0}$ and it is parametrized over an open interval $I=(c_-,c_+)$. If $c_+<+\infty$, then the closure of $\cup_{c\in[c_0,c_+)}L_c$ contains a critical point $z_\infty$ of $H$. A similar statement holds when $c_->-\infty$.
\end{lem}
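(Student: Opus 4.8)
The plan is to obtain existence, uniqueness and openness of the maximal cylinder from a soft connectedness argument, and then to deduce the blow-up statement from a compactness argument that uses the properness of $H$ in an essential way. \emph{Existence, uniqueness, openness.} First I would record a local rigidity statement: if $c\mapsto L_c$ and $c\mapsto L_c'$ are orbit cylinders over intervals $I$ and $I'$ with $L_{c_0}=L_{c_0}'$ for some $c_0\in\mathrm{int}\,I\cap\mathrm{int}\,I'$, then $L_c=L_c'$ on all of $I\cap I'$. Indeed, the set of $c\in I\cap I'$ at which $L_c=L_c'$ is non-empty, closed (the isotopies are continuous, say for the Hausdorff distance, so $c\mapsto d_{\mathrm{Haus}}(L_c,L_c')$ is continuous) and open (near a point of agreement both cylinders coincide with the unique local orbit cylinder through the common embedded circle supplied by Remark~\ref{r:imp}); since $I\cap I'$ is an interval, it must be all of $I\cap I'$. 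Consequently, letting $I^{*}$ be the union of the domains of all orbit cylinders through $L$ and defining $L_c$ to be their common value, one obtains a well-defined orbit cylinder on $I^{*}$ through $L$, which by construction is the unique maximal one. Finally $I^{*}=(c_-,c_+)$ is open: if $c_+\in I^{*}$, then $L_{c_+}$ would be an embedded circle and Remark~\ref{r:imp} would extend the cylinder to the right of $c_+$, contradicting maximality; symmetrically on the left.

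\emph{Structure of the limit set.} Suppose $c_+<+\infty$ and set $\mathcal{L}:=\overline{\bigcup_{c\in[c_0,c_+)}L_c}$. Since $L_c\subset H^{-1}(c)$ we have $\mathcal{L}\subset H^{-1}([c_0,c_+])$, which is compact because $H$ is proper; hence $\mathcal{L}$ is compact. Let $A:=\mathcal{L}\cap H^{-1}(c_+)$; choosing $c_n\uparrow c_+$ and $p_n\in L_{c_n}$ and extracting a convergent subsequence shows $A\neq\emptyset$. Assume, towards a contradiction, that $H$ has no critical point on $\mathcal{L}$, hence none on $A$. The key point is a flow-box computation: near a point $p\in A$ choose coordinates $(x,y)$ on a square $V$ in which $H=c_++y$ and $X_H=\partial_x$; if $q_m\to p$ with $q_m\in L_{c^{(m)}}$ and $c^{(m)}\uparrow c_+$, then, since $L_{c^{(m)}}$ is an orbit of $X_H$ contained in $\{y=c^{(m)}-c_+\}$, the full horizontal segment of $V$ through $q_m$ lies in $L_{c^{(m)}}$; letting $m\to\infty$ shows $H^{-1}(c_+)\cap V\subset\mathcal{L}$. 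Thus $A$ is open in $H^{-1}(c_+)$; being also compact and free of critical points, $A=C_1\sqcup\dots\sqcup C_k$ with $k\geq1$, each $C_i$ a circle carrying a non-constant periodic orbit, i.e.\ an embedded circle of $H$ at level $c_+$.

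\emph{Extending the cylinder and conclusion.} Pick pairwise disjoint tubular neighborhoods $U_i\supset C_i$ with $\overline{U_i}\cap A=C_i$. A compactness argument (otherwise extract $p_n\in L_{c_n}\setminus\bigcup_iU_i$ with $c_n\uparrow c_+$ converging into $A\subset\bigcup_iU_i$) shows $L_c\subset\bigcup_iU_i$ for $c$ close to $c_+$; since $L_c$ is connected it lies in a single $U_i$, and since the relevant index is locally constant in $c$ it equals a fixed value, say $1$, on a left-neighborhood of $c_+$. Running the same argument with $U_1$ shrunk to an arbitrarily small neighborhood of $C_1$ shows $L_c\to C_1$ as $c\uparrow c_+$. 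Now apply Remark~\ref{r:imp} to the embedded circle $C_1$ at level $c_+$: it yields an orbit cylinder $c\mapsto\widetilde C_c$ on $(c_+-\epsilon,c_++\epsilon)$ with $\widetilde C_{c_+}=C_1$, confined to a flow-box neighborhood $W$ of $C_1$ in which $H^{-1}(c)\cap W=\widetilde C_c$. For $c$ close enough to $c_+$ we then have $L_c\subset W$, hence $L_c\subset H^{-1}(c)\cap W=\widetilde C_c$; as $L_c$ is a connected component of $H^{-1}(c)$ and $\widetilde C_c$ is connected, $L_c=\widetilde C_c$. By the rigidity of the first step the two cylinders glue into an orbit cylinder on $(c_-,c_++\epsilon)$, contradicting the maximality of $(c_-,c_+)$. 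Therefore $H$ has a critical point $z_\infty\in\mathcal{L}$, necessarily at level $c_+$; the case $c_->-\infty$ follows by applying the argument to $-H$, which is again proper. I expect the crux to be this last part, namely showing that, absent critical points on $\mathcal{L}$, the circles $L_c$ converge to a single embedded circle at level $c_+$ rather than degenerating, oscillating, or escaping to infinity: properness rules out escape, while the flow-box and connectedness arguments control the local geometry of the limit set.
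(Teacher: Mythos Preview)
Your proof is correct and takes a genuinely different route from the paper's. The paper proceeds constructively: it fixes an auxiliary Riemannian metric, integrates the normalized gradient $\nabla H/|\nabla H|^2$ to obtain a flow $\Psi^t$ on the regular set satisfying $H\circ\Psi^t=H+t$, and then \emph{defines} the maximal cylinder by $L_c:=\Psi^{c-c_0}(L)$ with $c_+:=c_0+\inf_{z\in L}t^+(z)$ (and similarly for $c_-$). The critical point at the end is then read off directly from blow-up of a single flow line: by lower semi-continuity of $t^+$ the infimum is attained at some $z\in L$, and properness forces the forward trajectory of $z$ to accumulate on a critical point. Your argument instead obtains the maximal cylinder abstractly by gluing local cylinders via a connectedness argument, and deduces the critical point by contradiction: if the limit set $A$ at level $c_+$ contained no critical point, flow-box and compactness considerations force $L_c$ to converge to a single embedded circle $C_1$, to which Remark~\ref{r:imp} applies and extends the cylinder past $c_+$. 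The paper's approach is shorter and delivers the critical point with almost no work; yours avoids introducing an auxiliary metric and gradient-like flow and makes explicit the expected dichotomy (convergence to an embedded circle versus accumulation on a critical point), which is conceptually pleasant but more laborious in this setting.
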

\begin{proof}
Let $V\subset N$ be the open set of regular points of $H$. On $V$ we define a flow $t\mapsto\Psi^t$ such that 
\begin{equation}\label{e:dec}
	H\circ\Psi^t(z)=H(z)+t,\qquad \forall\,t\in(t^-(z),t^+(z)),
\end{equation}
where $(t^-(z),t^+(z))$ is the maximal interval of definition of the flow at $z\in V$. For instance $\Psi^t$ can be obtained integrating the vector field $X=\nabla H/|\nabla H|^2$ where the norm and the gradient are taken with respect to any Riemannian metric on $V$. If $t^+(z)<+\infty$, then the closure of the set $S=\{\Psi^t(z)\ |\ t\in[0,t^+(z))\}$ is contained in the set $H^{-1}([H(z),H(z)+t^+(z)])$ and hence is compact in $N$ since $H$ is proper. As $[0,t^+(z))$ is the maximal interval of definition of the trajectory $t\mapsto\Psi^t(z)$, the closure of $S$ cannot be contained in $V$ and therefore there exists a critical point $w_z\in N$ of $H$ lying in it.

Let now $L$ be an embedded circle for $H$ at level $c_0$. We define
\begin{equation}\label{e:cpm}
c_+:=c_0+\inf_{z\in L}t^+(z),\qquad c_-:=c_0+\sup_{z\in L}t^-(z)
\end{equation}
and the orbit cylinder $c\mapsto L_c:=\Psi^{c-c_0}(L)$ for $c\in(c_-,c_+)$ which is well-defined by \eqref{e:dec} and \eqref{e:cpm}. If $c_+<\infty$, then the lower semi-continuity of $t^+:V\to\R$ implies the existence of some $z\in L$ such that $c_+=c_0+t^+(z)$. It follows that there exists a critical point $z_\infty$ of $H$ in the closure of $\cup_{c\in[c_0,c_+)}L_c$. A similar argument works for $c_->-\infty$. This shows that $c\mapsto L_c$ is a maximal orbit cylinder. Uniqueness follows from the local uniqueness of orbit cylinders passing through a given embedded circle as observed in Remark~\ref{r:imp}. 
\end{proof}
\subsection{Resonant cylinders}\label{ss:res}
The goal of this subsection is to define resonant cylinders and prove their properties in Lemma~\ref{l:main}. This result will be used in the next subsection to give criteria for the existence of non-resonant cylinders.
\begin{dfn}
Let $c\mapsto L_c, \ c\in I,$ be an orbit cylinder.  We denote by $T(c)$ the period of the orbit of $H$ supported on $L_c$. We say that a circle $L=L_{c_0}$ with $c_0\in I$ is \textit{non-resonant}, if
\[
\frac{\diff T}{\diff c}(c_0)\neq0
\]
and \textit{resonant} otherwise. If $L_c$ is resonant for all $c\in I$ then we say that the orbit cylinder is resonant. If there is a $c_0\in I$ such that $L_{c_0}$ is non-resonant then we call the whole orbit cylinder non-resonant.
\end{dfn}
We recall now the classical relation connecting the period function $T$ to the area function of sub-cylinders. To this purpose consider two parameters $c_1,c_2\in I$. If $c_1\leq c_2$ the set
\[
U_{c_1,c_2}:=\bigcup_{c\in(c_1,c_2)}L_c
\]
is diffeomorphic to an open cylinder inside $N$ and we endow it with the orientation induced by $\omega$. For $c_2\leq c_1$, we define $U_{c_1,c_2}$ as $U_{c_2,c_1}$ with the opposite orientation. We fix $c_0\in I$ and define the area function
\[
A:I\to\R,\qquad A(c):=\int_{U_{c_0,c}}\omega.
\]
\begin{lem}\label{l:ap}
There holds
\[
\frac{\diff A}{\diff c}(c_1)=T(c_1),\qquad\forall\,c_1\in I.
\]
\end{lem}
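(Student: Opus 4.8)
The plan is to reduce the statement to the classical fact that the period of a periodic orbit equals the derivative of the enclosed symplectic area, which is most transparently seen via action--angle type coordinates or, more concretely, via the flow $\Psi^t$ constructed in the proof of Lemma~\ref{l:emb}. First I would fix $c_1\in I$ and work on a small subinterval $(c_1-\delta,c_1+\delta)\subset I$. On the open cylinder $\bigcup_{c}L_c$ over this subinterval, which consists of regular points of $H$, I would introduce coordinates adapted to the dynamics: let $\tau$ be the (multivalued) time coordinate along the Hamiltonian flow of $X_H$, normalized so that $X_H=\partial_\tau$, and note that $H$ itself serves as the transverse coordinate since $\diff H(X_H)=0$ and $\diff H\neq 0$ on this set. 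In these coordinates $\tau$ runs over $\R/T(H)\Z$ on the circle $L_{H=\mathrm{const}}$, where $T(\cdot)$ is the period function reparametrized by the energy value.

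The key computation is then that, in these coordinates, $\omega = \diff H\wedge\diff\tau$. This follows from $\omega(X_H,\cdot)=-\diff H$: writing $\omega=f\,\diff H\wedge\diff\tau$ for some function $f$ (any $2$-form on a surface is a multiple of $\diff H\wedge\diff\tau$ once these are independent $1$-forms, which they are since $X_H$ is nonzero and transverse to the level sets), we get $\omega(X_H,\cdot)=\omega(\partial_\tau,\cdot)=f\,\diff H\wedge\diff\tau(\partial_\tau,\cdot)=-f\,\diff H$, so $f\equiv 1$. Here one must be a little careful that $\tau$ is only locally defined, but $\diff\tau$ is a well-defined closed $1$-form on the cylinder, so the identity $\omega=\diff H\wedge\diff\tau$ is global on $U_{c_1-\delta,c_1+\delta}$. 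With the correct sign conventions for the orientation induced by $\omega$, Fubini's theorem then gives
\[
A(c)-A(c_1-\delta)=\int_{U_{c_1-\delta,\,c}}\omega=\int_{c_1-\delta}^{c}\left(\int_{L_{c'}}\diff\tau\right)\diff c'=\int_{c_1-\delta}^{c}T(c')\,\diff c',
\]
since $\int_{L_{c'}}\diff\tau$ is exactly the period $T(c')$. Differentiating in $c$ at $c=c_1$ yields $\diff A/\diff c(c_1)=T(c_1)$.

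The main obstacle I anticipate is bookkeeping of orientations and signs: one must check that the orientation that $\omega$ induces on $U_{c_1,c_2}$ (for $c_1\le c_2$) matches the product orientation coming from ``increasing $c$'' times ``the direction of the flow'', so that no sign flip enters; the paper's convention $\omega(X_H,\cdot)=-\diff H$ has to be tracked through the identification $\omega=\diff H\wedge\diff\tau$. A secondary technical point is justifying that $\tau$, though multivalued, has a globally well-defined differential on the cylinder and that the total $\tau$-increment around each circle $L_{c'}$ is the period $T(c')$; this is immediate once one observes that the return time of the $X_H$-flow to a transversal is constant along $L_{c'}$ and equals $T(c')$ by definition. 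Everything else—smoothness of $T$ in $c$, which is needed even to state the differentiability of $A$, and the fact that $U_{c_1,c_2}$ is an embedded open cylinder—has already been arranged by Lemma~\ref{l:emb} and the definition of orbit cylinder, so the argument above is essentially complete once the sign is pinned down.
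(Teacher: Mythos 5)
Your proposal is correct and follows essentially the same route as the paper: both parametrize the cylinder by the energy $c$ and an angle, identify the $\omega$-density with the quantity whose circle-integral is the period, and conclude by the fundamental theorem of calculus. Your coordinate choice (flow-time $\tau$, so that $\omega=\diff H\wedge\diff\tau$ after the sign check $f\equiv 1$ you carried out) is a slightly cleaner version of the paper's use of an auxiliary function $a$ with $\partial_\theta F=aX_H$, and the orientation bookkeeping you flag does work out consistently with the convention $\omega(X_H,\cdot)=-\diff H$ and the paper's definition of the orientation on $U_{c_0,c}$.
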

\begin{proof}
	Let us consider an orientation-preserving parametrization $F:I\times\T\to N$ of the orbit cylinder, so that $\theta\mapsto F(c,\theta)$ parametrizes $L_c$ and $H(F(c,\theta))=c$. Then there exists a function $a:I\times\T\to(0,\infty)$ such that $\p_\theta F=aX_H$ and $\diff H(\p_c F)=1$. If $t$ is the time parameter of $X_H$ along $L_{c_1}$, then $\diff t(\p_\theta F)=a(c_1,\cdot)$ and
	\[
	T(c_1)=\int_{\T}\diff t(\p_\theta F)\,\diff\theta=\int_{\T}a(c_1,\theta)\,\diff\theta.
	\]
	To compute the area, we observe that
	\[
	\omega(\p_c F,\p_\theta F)=a\omega(\p_c F,X_H)=a\diff H(\p_c F)=a.
	\]
	Therefore, we have
	\[
	A(c_1)=\int_{c_0}^{c_1}\int_{\T}\omega(\p_c F,\p_\theta F)\,\diff c\,\diff\theta=\int_{c_0}^{c_1}\Big(\int_{\T}a(c,\theta)\,\diff\theta\Big)\diff c
	\]
	and by the fundamental theorem of calculus
	\[
	\frac{\diff A}{\diff c}(c_1)=\int_\T a(c_1,\theta)\,\diff\theta=T(c_1).\qedhere
	\]
\end{proof}

We are ready to prove the main lemma.

\begin{lem}\label{l:main}
Let $(N,\omega)$ be a symplectic surface without boundary having finite symplectic area. Let $H:N\to\R$ be a proper Hamiltonian. Let $c\mapsto L_c$ be a forward maximal orbit cylinder with $c\in[c_0,c_+)$ starting at some $L=L_{c_0}$. If this orbit cylinder is resonant, then $c_+<+\infty$ and there exists a non-degenerate local maximum point $z_\infty\in N$ of $H$ such that $D:=U_{c_0,c_+}\cup\{z_\infty\}$ is an embedded disc in $N$. In particular $H$ induces a Hamiltonian circle action on $D$ with $z_\infty$ as the only fixed point. A similar statement holds for resonant backward maximal cylinders.	
\end{lem}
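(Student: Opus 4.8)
The plan is to exploit the relation between the period function $T$ and the area function $A$ from Lemma~\ref{l:ap} together with the finite-area hypothesis. Since the cylinder $c\mapsto L_c$ is resonant, $\tfrac{\diff T}{\diff c}\equiv 0$ on $[c_0,c_+)$, so $T$ is a constant $T_0>0$ on the whole cylinder; hence by Lemma~\ref{l:ap} the area function satisfies $A(c)=A(c_0)+T_0(c-c_0)$, which is strictly increasing and unbounded as $c\to c_+$ unless $c_+<+\infty$. Because the total symplectic area of $N$ is finite and $A(c)$ is the area of the sub-cylinder $U_{c_0,c}\subset N$, we must have $c_+<+\infty$, and moreover $A(c_+^-)=A(c_0)+T_0(c_+-c_0)<\infty$. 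This gives the first assertion for free, and also tells us the open cylinder $U_{c_0,c_+}$ has finite area $T_0(c_+-c_0)$.

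Next I would invoke Lemma~\ref{l:emb}: since $c_+<+\infty$, the closure of $U_{c_0,c_+}=\cup_{c\in[c_0,c_+)}L_c$ contains a critical point $z_\infty$ of $H$. I would first argue that $z_\infty$ is the \emph{only} point in $\overline{U_{c_0,c_+}}\setminus U_{c_0,c_+}$ that does not already lie in $U_{c_0,c_+}\cup L_{c_0}$ — more precisely that the topological frontier of $U_{c_0,c_+}$ beyond the initial circle $L_{c_0}$ reduces to $\{z_\infty\}$. The key point is that $H$ is continuous and equals $c_+$ on this frontier (by \eqref{e:dec} and the definition of $c_+$), and the level set $H^{-1}(c_+)$ meets $\overline{U_{c_0,c_+}}$ in a compact set; if this set contained a regular point $z$, the orbit cylinder could be extended past $z$ by Remark~\ref{r:imp}, contradicting that $L_{c_0}$ started a forward \emph{maximal} cylinder (here one uses that the extended cylinder glues to the given one, since orbit cylinders through a given circle are unique). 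So the frontier-at-level-$c_+$ consists only of critical points, and a connectedness/compactness argument — the circles $L_c$ shrink down as $c\to c_+$ because their enclosed area $A(c)$ stays bounded and the period $T_0$ is fixed, forcing diameters to go to zero — shows there is exactly one such critical point $z_\infty$ and that $L_c\to z_\infty$. Consequently $D:=U_{c_0,c_+}\cup\{z_\infty\}$ is, topologically, the union of nested circles collapsing to a point, hence a disc; with a little care (e.g. using the flow $\Psi^t$ of \eqref{e:dec} near $z_\infty$) one upgrades this to an embedded disc.

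It then remains to show $z_\infty$ is a \emph{non-degenerate local maximum} and that $H$ generates a circle action on $D$. That $z_\infty$ is a local maximum is immediate: $H<c_+$ on $U_{c_0,c_+}$ and $H(z_\infty)=c_+$, and $D$ is a neighborhood of $z_\infty$ in $N$ intersected appropriately — one checks $z_\infty$ has a punctured neighborhood contained in $U_{c_0,c_+}$. For the Hamiltonian circle action, observe that the period $T(c)\equiv T_0$ is constant, so rescaling time uniformly, the flow of $X_H$ on $U_{c_0,c_+}$ is periodic with common period $T_0$; extending by the fixed point $z_\infty$ gives a continuous, hence (being Hamiltonian and smooth away from the fixed point, and the fixed point being isolated) smooth $S^1$-action on $D$ with $z_\infty$ the unique fixed point. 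Finally, non-degeneracy of $z_\infty$ follows from the classification of Hamiltonian $S^1$-actions near an isolated fixed point on a surface: in suitable symplectic coordinates the action is the standard rotation and $H$ is, to leading order, a nonzero multiple of $x^2+y^2$, which is non-degenerate. The backward statement is obtained by applying the forward result to $-H$. The main obstacle I expect is the geometric step showing the circles $L_c$ genuinely collapse to a single point $z_\infty$ (ruling out that the frontier is a more complicated critical set, e.g. a figure-eight or several critical points), which is where the finite-area bound on $A(c)$ together with the constancy of the period must be combined carefully.
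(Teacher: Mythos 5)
Your derivation of $c_+<+\infty$ is exactly the paper's: resonance means the period $T$ is a positive constant $\tau$ on the whole cylinder, so by Lemma~\ref{l:ap} the area $A(c)=\tau(c-c_0)$ grows linearly and must stay below the finite total area. The overall skeleton of the rest (find a critical point in the closure, argue the circles collapse to it, build the disc, get non-degeneracy from the classification of Hamiltonian $S^1$-actions on a surface near an isolated fixed point) also matches the paper. However, there is a genuine gap precisely at the step you yourself flag as the ``main obstacle.''

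The justification you offer for the collapse --- ``the circles $L_c$ shrink down as $c\to c_+$ because their enclosed area $A(c)$ stays bounded and the period $T_0$ is fixed, forcing diameters to go to zero'' --- is not a valid inference. Boundedness of the cylinder's area (or even the fact that the tail area $A(c_+^-)-A(c)=\tau(c_+-c)\to 0$) puts no constraint on the diameter of the individual circles $L_c$: a closed curve can have uniformly large diameter while bounding arbitrarily small area, and the fixed period $\tau$ by itself does not control how fast the orbit moves, so it does not control the length or diameter of the orbit either. In particular this reasoning cannot rule out the scenario you are worried about, namely that $L_c$ limits onto a figure-eight or onto several critical points. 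The paper's argument at this step is different and is the crux of the lemma: since every $L_{c_n}$ is a single $\Phi_H$-orbit of the \emph{same} period $\tau$, it equals $\{\Phi^s_H(z_n):s\in[0,\tau]\}$ for any $z_n\in L_{c_n}$; choosing $z_n\to z_\infty$ (which Lemma~\ref{l:emb} provides) and using that $z_\infty$ is a rest point of $\Phi_H$, continuous dependence of the flow on initial conditions over the compact time interval $[0,\tau]$ forces $\sup_{z\in L_{c_n}}\mathrm{dist}(z,z_\infty)\to 0$. This uses the constancy of the period in an essential dynamical way, not through an area estimate. Once you have this uniform collapse, the Jordan-curve/chart argument produces the embedded disc, and non-degeneracy of $z_\infty$ follows as you say (the paper cites \cite[Lemma 5.5.8]{McDuff:1998uu}). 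Your auxiliary observation that a regular point at level $c_+$ in the frontier would contradict forward-maximality is fine in spirit, but it is not needed once the collapse is established by the periodicity argument, and it does not by itself exclude a critical level set more complicated than a point.
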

\begin{proof}
Since the orbit cylinder $c\mapsto L_c$ is resonant, there exists a smallest positive $\tau$ such that
\begin{equation}\label{e:period}
\Phi^\tau_H(w)=w,\qquad \forall\,c\in[c_0,c_+),\ \forall\, w\in L_c.
\end{equation}
Since $\tau$ is the common period of all the orbits in the cylinder, Lemma~\ref{l:ap} implies
\[
\tau (c-c_0)=\int_{U_{c_0,c}}\omega\leq \int_N\omega<+\infty
\]
which shows that $c_+<+\infty$. By Lemma~\ref{l:emb}, there exists a sequence $c_n\to c_+$ and a sequence $z_n\in L_{c_n}$ such that $z_n$ converges to a critical point $z_\infty$ of $H$. We claim that
\begin{equation}\label{e:dist}
\sup_{z\in L_{c_n}}\mathrm{dist}(z,z_\infty)\to0,\quad \text{for }n\to\infty.
\end{equation}

Let us assume this were not the case. Since $H$ is proper, the set $\bigcup_{n\in\N}L_{c_n}$ is contained in a compact set of $N$. This fact together with the negation of \eqref{e:dist} implies that we can extract a subsequence, which we still label by $n$, with the following property: There exists $w_\infty\in N\setminus\{z_\infty\}$ and a sequence $w_n\in L_{c_n}$ such that 
\[
\lim_{n\to\infty}w_n=w_\infty.
\]
By \eqref{e:period}, there is a sequence $\tau_n\in[0,\tau)$ such that $w_n=\Phi^{\tau_n}_H(z_n)$. Up to extracting a subsequence, we can assume that $\tau_n\to\tau_\infty\in[0,\tau]$ as $n\to\infty$. By the continuous dependence of the flow $\Phi_H$ on the initial condition and the fact that $z_\infty$ is a rest point of $\Phi_H$, we get the contradiction
\[
z_\infty=\Phi^{\tau_\infty}_H(z_\infty)=\lim_{n\to\infty}\Phi^{\tau_n}_H(z_n)=\lim_{n\to\infty}w_n=w_\infty.
\]

Let us now consider a chart $\varphi:U_\infty\to\R^2$ centered at $z_\infty$. By \eqref{e:dist} the circle $L_{c_n}$ is contained in $U_\infty$ for $n$ large enough and therefore $\varphi(L_{c_n})$ is an embedded circle in $\R^2$. By the Jordan curve theorem $\varphi(L_{c_n})$ bounds a compact region homeomorphic to a disc $D_n$. By \eqref{e:dist}, there holds
\begin{equation}\label{e:dist0}
\sup_{z\in \varphi^{-1}(D_n)}\mathrm{dist}(z,z_\infty)\to0,\quad \text{ for }n\to\infty.
\end{equation}
Indeed, the Euclidean distance in the chart is equivalent to the distance function on $N$ and therefore $D_n$ is contained in a euclidean ball of radius smaller than a fixed constant times $\sup_{z\in L_{c_n}}\mathrm{dist}(z,z_\infty)$. As a consequence,
\begin{equation}\label{e:area0}
\lim_{n\to\infty}\int_{\varphi^{-1}(D_n)}\omega=0.
\end{equation}

The next step is to prove the following claim: There exists $m$ such that for all $n\geq m$, the circle $L_c$ is contained in the interior of $\varphi^{-1}(D_n)$ for all $c\in(c_n,c_+)$. If this were not the case, then $U_{c_0,c_n}\subset\varphi^{-1}(D_n)$ for some $n$ that can be chosen arbitrarily large. Therefore, 
\[
\int_{\varphi^{-1}(D_n)}\omega\geq \int_{U_{c_0,c_n}}\omega\geq \int_{U_{c_0,c_1}}\omega,
\]
which contradicts \eqref{e:area0} for $n$ large enough and proves the claim.

To prove that $D:=\{z_\infty\}\cup U_{c_0,c_+}$ is a neighborhood of $z_\infty$ homeomorphic to a disc is enough to show that $\varphi^{-1}(\mathring D_m)$ is equal to $\{z_\infty\}\cup U_{c_m,c_+}$. By the claim, $\{z_\infty\}\cup U_{c_m,c_+}$ is contained in $\varphi^{-1}(\mathring D_m)$. If $z$ were a point of $\varphi^{-1}(\mathring D_m)$ which is not in $\{z_\infty\}\cup U_{c_m,c_+}$, then $\varphi(L_{c_m})$ is not homotopic to a constant in $D_m\setminus\{\varphi(z)\}$. However, $\varphi(L_{c_m})$ is also homotopic in $D_m\setminus\{\varphi(z)\}$ to $\varphi(L_{c_n})$ for all $n$ large enough and $\varphi(L_{c_n})$
is homotopic to a constant in $D_m\setminus\{\varphi(z)\}$ by \eqref{e:dist}, a contradiction. This shows that $D$ is an open, embedded disc in $N$ and that $\Phi_H$ induces a Hamiltonian circle action on $D$ with unique fixed point $z_\infty$. By \cite[Lemma 5.5.8]{McDuff:1998uu}, $z_\infty$ is a non-degenerate local maximum of $H$.
\end{proof}
\subsection{Non-resonant cylinders}
In this subsection, we exploit Lemma~\ref{l:main} to prove the existence of non-resonant cylinders in certain situations. These results will be used in the proofs of Theorems~\ref{t:main} and~\ref{t:main2}.

\begin{thm}
\label{t:dichotomy}
Let $(N,\omega)$ be a symplectic, connected surface without boundary having finite symplectic area. Let $H:N\to\R$ be a non-constant proper Hamiltonian. Then the set of maximal orbit cylinders is non-empty and
exactly one of the following statements holds:
	\begin{enumerate}[(i)]
		\item All maximal orbit cylinders are non-resonant.
		\item There is only one maximal orbit cylinder. Such a cylinder is resonant and $N$ is diffeomorphic to $S^2$. In this case, $N$ is the union of the cylinder and the unique maximum $z_\infty$ and minimum point $z_{-\infty}$ of $H$. The points $z_\infty$ and $z_{-\infty}$ are non-degenerate and $\Phi_H$ induces a Hamiltonian circle action on $N\cong S^2$ with $\{z_\infty,z_{-\infty}\}$ as fixed-point set.
	\end{enumerate}
\end{thm}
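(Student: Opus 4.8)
The plan is to obtain the non-emptiness statement from Sard's theorem, and then to establish the dichotomy by feeding a resonant maximal orbit cylinder into Lemma~\ref{l:main}, applied once to its forward half and once to its backward half; the two embedded discs so produced will be glued along their common boundary to exhaust $N$. For non-emptiness I would argue as follows: since $H$ is non-constant and $N$ is connected, the range of $H$ is an interval of positive length, and since $H$ is proper and smooth, Sard's theorem provides a regular value $c$ attained by $H$; then $H^{-1}(c)$ is a non-empty compact $1$-manifold, hence a finite union of embedded circles of $H$, and Lemma~\ref{l:emb} produces a maximal orbit cylinder through any one of them. I would also record at the outset that (i) and (ii) are mutually exclusive, since (ii) exhibits a resonant maximal orbit cylinder.

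Suppose now (i) fails, so there is a maximal orbit cylinder $c\mapsto L_c$, $c\in(c_-,c_+)$, all of whose circles are resonant; in particular $\mathrm{d}T/\mathrm{d}c$ vanishes identically on $(c_-,c_+)$, so the period function is constant, say $\equiv\tau$. I fix an interior parameter $c_0$ and note that the restrictions to $[c_0,c_+)$ and to $(c_-,c_0]$ are a resonant forward, resp.\ backward, maximal orbit cylinder. Applying Lemma~\ref{l:main} to each, I obtain a non-degenerate local maximum $z_\infty$ and a non-degenerate local minimum $z_{-\infty}$ of $H$, together with open embedded discs $D_+=U_{c_0,c_+}\cup\{z_\infty\}$ and $D_-=U_{c_-,c_0}\cup\{z_{-\infty}\}$, on each of which $\Phi_H$ acts through a Hamiltonian circle action.

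The core of the argument is then the gluing. Using that the level sets $L_c$ with $c$ close to $c_0$ form a tubular neighbourhood of $L_{c_0}$, one checks that $D_+$ and $D_-$ approach $L_{c_0}$ from its two sides, so that $\overline{D_+}=D_+\cup L_{c_0}$ and $\overline{D_-}=D_-\cup L_{c_0}$ are closed discs meeting exactly along $L_{c_0}$; their union $X$ is thus homeomorphic to two closed discs glued along their boundary circle, i.e.\ to $S^2$. As $X$ is compact, hence closed in $N$, and also open in $N$ (it contains $D_\pm$ together with a full tubular neighbourhood of $L_{c_0}$), connectedness of $N$ forces $N=X\cong S^2$.

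The remaining assertions of (ii) are then bookkeeping. Every point of $N$ other than $z_\infty$ and $z_{-\infty}$ lies on some $L_c$ and is hence regular, so $\{z_\infty,z_{-\infty}\}$ is precisely the critical set of $H$ and, by compactness, these are its unique global maximum and minimum; each level $H^{-1}(c)$ with $c\in(c_-,c_+)$ equals the single circle $L_c$, so by the uniqueness part of Lemma~\ref{l:emb} the cylinder $c\mapsto L_c$ is the only maximal orbit cylinder; and since $\Phi_H^\tau$ fixes every $L_c$ as well as $z_\infty$ and $z_{-\infty}$, while $X_H$ is complete because $N$ is now compact, we get $\Phi_H^\tau=\mathrm{id}$, so $\Phi_H$ descends to a Hamiltonian $S^1$-action on $N$ with fixed-point set $\{z_\infty,z_{-\infty}\}$. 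This proves (ii), and hence that exactly one of (i), (ii) holds. The genuine obstacle is the gluing step: correctly matching the local product structure of the level sets near $L_{c_0}$ with the global disc structure supplied by Lemma~\ref{l:main} to conclude that $\overline{D_+}\cup\overline{D_-}$ is an open and closed $2$-sphere inside $N$; everything else is routine or immediate from results already established.
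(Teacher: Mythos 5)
Your proposal follows the same route as the paper: non-emptiness via Sard's theorem combined with Lemma~\ref{l:emb}, then feeding a resonant maximal cylinder into Lemma~\ref{l:main} applied separately to its forward and backward halves and invoking connectedness of~$N$ to close the argument. The paper compresses the gluing of $\overline{D_+}$ and $\overline{D_-}$ along $L_{c_0}$ into the single phrase ``we get item (ii) since $N$ is connected,'' whereas you spell out the open-and-closed argument in full; your version is a correct elaboration of the same idea.
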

\begin{proof}
Since $H$ is proper and non-constant, there exists an embedded circle $L$ of $H$ by Sard's theorem and hence a maximal orbit cylinder by Lemma~\ref{l:emb}. If there exists a resonant maximal cylinder $c\mapsto L_{c}$, $c\in(c_-,c_+)$, then applying Lemma~\ref{l:main} to the forward and backward maximal orbit cylinders through some $L_{c_0}$ we get item (ii) since $N$ is connected.	
\end{proof}

\begin{thm}\label{t:unbounded}
Let $(N,\omega)$ be a symplectic surface without boundary having finite symplectic area and such that no connected component of $N$ is diffeomorphic to the two-sphere. Let $H:N\to\R$ be a proper Hamiltonian. If $H$ is unbounded from above, then for every $c_0\in\R$ there exists a neighborhood $V\subset \{H> c_0\}$ of $\{H=+\infty\}$ with smooth, non-empty boundary such that $\p V$ is the union of non-resonant circles. A similar statement holds when $H$ is unbounded from below.
\end{thm}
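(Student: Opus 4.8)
The plan is to start from the open set $\{H>c_1\}$ for a suitably chosen regular value $c_1>c_0$ --- which is already a neighborhood of $\{H=+\infty\}$ --- and then modify it near its finitely many boundary circles: each boundary circle is pushed slightly ``inward'' onto a nearby non-resonant circle, except that those boundary circles whose forward orbit cylinder turns out to be resonant are discarded altogether, since by Lemma~\ref{l:main} such a cylinder merely caps off a disc on which $H$ is bounded. At the outset I would reduce to the case that $N$ is connected and $H$ is unbounded from above: for general $N$ one runs the argument on each connected component on which $H$ is unbounded from above --- such a component is automatically non-compact, hence not diffeomorphic to $S^2$, and still has finite area and carries a proper Hamiltonian --- and takes $V$ to be empty on the remaining components; the case of $H$ unbounded from below follows by applying the result to $-H$, which has the same orbit cylinders, periods and non-resonant circles.

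So assume $N$ connected and $H$ unbounded above. By Sard's theorem pick a regular value $c_1>c_0$ of $H$ lying in the interior of the range. Since $H$ is proper, $\{H=c_1\}$ is a compact $1$-manifold, hence a non-empty finite disjoint union of embedded circles $L^1,\dots,L^k$, and $H\le c_1$ on $N\setminus\{H>c_1\}$. For each $j$ let $c\mapsto L^j_c$, $c\in[c_1,c^j_+)$, be the forward maximal orbit cylinder starting at $L^j=L^j_{c_1}$ (Lemma~\ref{l:emb}), and set $U^j:=\bigcup_{c\in(c_1,c^j_+)}L^j_c$. A key point is that the $U^j$ are pairwise disjoint: if $p\in U^i\cap U^j$ then $p$ lies on a single component $\Lambda$ of the regular level set $\{H=H(p)\}$, which therefore belongs to the maximal orbit cylinders through $L^i$ and through $L^j$; by the uniqueness in Lemma~\ref{l:emb} these coincide, and since the circle of a maximal cylinder at level $c_1$ is unique we get $L^i=L^j$, i.e.\ $i=j$. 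Now apply Lemma~\ref{l:main} to each forward cylinder $c\mapsto L^j_c$. If it is resonant, then $c^j_+<+\infty$, there is a non-degenerate local maximum $z^j_\infty$ of $H$, and $\overline{D_j}:=U^j\cup\{z^j_\infty\}\cup L^j$ is a compact embedded disc with boundary $L^j$ on which $H\le c^j_+$; collect these $j$ in $J_{\mathrm{res}}$. If it is non-resonant, then it contains a non-resonant circle, and by openness of $\{c:\frac{\diff T}{\diff c}(c)\neq0\}$ one can choose $c'_j\in(c_1,c^j_+)$ with $L^j_{c'_j}$ non-resonant; set $\overline{U_j}:=\bigcup_{c\in[c_1,c'_j]}L^j_c$ and collect these $j$ in $J_{\mathrm{nr}}$. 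Since their interiors lie in the pairwise disjoint $U^j$, the finitely many compact sets $\overline{D_j}$ $(j\in J_{\mathrm{res}})$ and $\overline{U_j}$ $(j\in J_{\mathrm{nr}})$ are pairwise disjoint.

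Finally I would set
\[
V:=\{H>c_1\}\setminus\Big(\bigcup_{j\in J_{\mathrm{res}}}\overline{D_j}\ \cup\ \bigcup_{j\in J_{\mathrm{nr}}}\overline{U_j}\Big),
\]
which is open and contained in $\{H>c_1\}\subset\{H>c_0\}$. On $N\setminus V$ the function $H$ is bounded above, by the maximum of $c_1$, of the $c^j_+$ with $j\in J_{\mathrm{res}}$ and of the $c'_j$ with $j\in J_{\mathrm{nr}}$, so $V$ is a neighborhood of $\{H=+\infty\}$; moreover $V\neq\emptyset$ because $H$ is unbounded above and $V\neq N$ because $\{H=c_1\}\neq\emptyset$. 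A case analysis of the points of $N\setminus V$ --- those with $H<c_1$; those lying on some $L^j$, a neighborhood of which meets $\{H>c_1\}$ only inside the corresponding $\overline{D_j}$ or $\overline{U_j}$; and the interior points of the removed discs and sub-cylinders --- shows that the only points of $N\setminus V$ that are limits of points of $V$ are the circles $L^j_{c'_j}$ with $j\in J_{\mathrm{nr}}$. Hence $\partial V=\bigsqcup_{j\in J_{\mathrm{nr}}}L^j_{c'_j}$ is a disjoint union of non-resonant embedded circles; it is smooth, and non-empty because $N$ is connected while $V$ is a non-empty proper open subset.

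The step I expect to be the main obstacle is the coordination of the two alternatives of Lemma~\ref{l:main} with the boundary bookkeeping in the last paragraph: one must verify that the resonant forward cylinders are precisely the pieces that can be cut away (as bounded-$H$ discs) without destroying the property of being a neighborhood of $\{H=+\infty\}$ or creating new boundary, and that deleting the closed discs and sub-cylinders leaves exactly the pushed circles $L^j_{c'_j}$ as $\partial V$ --- all of which rests on the pairwise disjointness of the forward cylinders established above.
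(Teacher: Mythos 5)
Your proof follows essentially the same strategy as the paper's: take a regular level $\{H=c_1\}$, consider the maximal forward orbit cylinder from each boundary circle, cap off the resonant ones with discs via Lemma~\ref{l:main}, truncate each non-resonant one at a non-resonant circle, and obtain $V$ by deleting these from $\{H>c_1\}$. The one place where you diverge is the treatment of the case $J_{\mathrm{nr}}=\emptyset$, i.e.\ all forward cylinders at level $c_1$ resonant. The paper handles this by passing to a second regular level $c'>\max_j H|_{D_j}$ and invoking the no-$S^2$ hypothesis through Theorem~\ref{t:dichotomy}. You instead reduce at the outset to a connected component of $N$ on which $H$ is unbounded above and there show indirectly, via the general fact that a non-empty proper open subset of a connected manifold has non-empty boundary, that $J_{\mathrm{nr}}$ cannot be empty. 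For connected $N$ this is a correct and rather clean alternative to the paper's second-level-set argument.

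The step that is not fully justified is the reduction to the connected case. You run the argument only on the components where $H$ is unbounded above and set $V=\emptyset$ elsewhere, implicitly assuming (a) that at least one such component exists, and (b) that $H$ is bounded above on the union of the remaining ones. Neither is automatic when $N$ has infinitely many components: one can take $N=\bigsqcup_{n\in\N}N_n$ with each $N_n$ a compact surface of genus $\geq 1$ and a function $H$ with $\inf H|_{N_n}\to+\infty$ and $\sup H|_{N_n}\to+\infty$; this is compatible with $H$ proper (since $\{a\leq H\leq b\}$ then meets only finitely many $N_n$) and with $\int_N\omega<\infty$ (take the areas summable). Here $H$ is unbounded above on $N$ but bounded above on every component, so your recipe produces $V=\emptyset$, which is not a neighborhood of $\{H=+\infty\}$ and in particular has empty boundary. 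Note also that any component on which $H$ is bounded above must be compact (it sits inside the compact set $\{H\leq \sup H|_{N_n}\}$), so the no-$S^2$ hypothesis is genuinely in play for those components, exactly as the paper's second-level-set argument requires. Repairing your proof is straightforward once this is noticed -- e.g.\ dispense with the reduction and reproduce the paper's two-level argument, or observe that the finitely many circles in $\{H=c_1\}$ already confine the whole construction to finitely many components -- but as written the reduction is a gap. The rest of your write-up, including the disjointness of the forward cylinders, the boundary bookkeeping, and the use of $\partial V\neq\emptyset$ to force $J_{\mathrm{nr}}\neq\emptyset$ in the connected case, is correct.
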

\begin{proof}
Since $H$ is unbounded from above, there exists a regular value $c\geq c_0$ of $H$. Since $H$ is proper, the components $L_c^{(1)},\ldots,L_c^{(k)}$ of $\{H=c\}$ are embedded circles. Let
\[
S:=\{i\in \{1,\ldots,k\}\ |\ \text{the maximal forward cylinder of $L_c^{(i)}$ is resonant}\}.
\]
For all $i\in S$, there exists by Lemma~\ref{l:main} an embedded disc $D_i$ with boundary $L_c^{(i)}$ contained in $\{H\geq c\}$. We consider two cases according to whether $S\neq\{1,\ldots,k\}$ or not. If $S\neq\{1,\ldots,k\}$, then for all $i\notin S$, let $L_{c_i}^{(i)}$, $c_i\geq c$, be a non-resonant circle in the maximal forward cylinder of $L_c^{(i)}$, and let $U_{c,c_i}^{(i)}$ be the portion of the cylinder between $L_c^{(i)}$ and $L_{c_i}^{(i)}$. The statement of the theorem follows taking
\begin{equation*}
V:=\{H\geq c\}\setminus\Big[\Big(\bigcup_{i\in S}D_i\Big)\cup\Big(\bigcup_{i\notin S}U_{c,c_i}^{(i)}\Big)\Big].
\end{equation*}
If $S=\{1,\ldots,k\}$, then let us consider a regular value $c'> \max H|_{D_i}$ for all $i=1,\ldots,k$ which exists since $H$ is proper and unbounded from above. We denote by  $L_{c'}^{(1)},\ldots,L_{c'}^{(k')}$ the connected components of $\{H=c'\}$. Since $N$ does not contain connected components homeomorphic to the two-sphere, the maximal orbit cylinder through $L_{c'}^{(j)}$ contains a non-resonant circle $L_{c'_j}^{(j)}\subset\{H=c_j'\}$. By the choice of $c'$, we have $c'_j\geq c$ and we define $S':=\{j\in\{1,\ldots,k'\}\ |\ c'_j\geq c'\}$. In this case the statement of the theorem follows taking
\[
V:=\Big(\bigcup_{j\notin S'}U_{c'_j,c'}^{(j)}\Big)\cup\{H\geq c'\}\setminus\bigcup_{j\in S'}U_{c',c'_j}^{(j)}.\qedhere
\]\end{proof}

\begin{thm}\label{t:z-}
Let $(N,\omega)$ be a symplectic surface without boundary and let $H:N\to\R$ be a Hamiltonian having a strict local minimum at $z_-\in N$ which is degenerate. For every neighborhood $U$ of $z_-$ there exists a neighborhood $V\subset U$ of $z_-$ such that $\p V$ is a union of non-resonant circles. If $z_-$ is isolated in the set of  critical points of $H$, then $V$ can be taken to be an embedded disc.
\end{thm}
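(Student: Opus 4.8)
The plan is to localize near $z_-$, to replace $H$ by a proper Hamiltonian on an enlarged surface obtained by capping off, so that Lemmas~\ref{l:emb} and~\ref{l:main} become available, and then to carry out the argument of Theorem~\ref{t:unbounded} ``from below'': the degeneracy of $z_-$ will rule out the resonant alternative of Lemma~\ref{l:main}, and the remaining boundary circles will be pushed to non-resonant ones.

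First I would construct a compact model. Since $z_-$ is a strict local minimum, after shrinking $U$ I may assume $\overline U$ is a closed disc, that $H>c_0:=H(z_-)$ on $\overline U\setminus\{z_-\}$, and --- in the last case of the statement --- that $z_-$ is the only critical point of $H$ in $\overline U$. Choosing a regular value $c\in(c_0,\min_{\p U}H)$ and letting $\overline W$ be the connected component of $\{H\le c\}\cap\overline U$ containing $z_-$, one obtains a compact connected surface with non-empty boundary $\p\overline W=L^{(1)}\cup\dots\cup L^{(m)}\subset\{H=c\}$, $m\ge1$; in the last case the negative gradient flow of $H$ retracts $\overline W$ onto $z_-$, so $\overline W$ is an embedded disc and $m=1$. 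Then I would glue a half-open cylinder $L^{(i)}\times[0,1)$ to $\overline W$ along each $L^{(i)}$, obtaining a surface $N'$ without boundary, and extend $\omega$ to a symplectic form $\omega'$ of finite total area and $H$ to a smooth function $H'$ with $H'\equiv H$ on $\overline W$, $H'>c$ on the new cylinders, and $H'\to+\infty$ along their ends. Thus $H'$ is proper, $\{H'\le c\}=\overline W$, and $(H',\omega')$ coincides with $(H,\omega)$ throughout $\overline W$, so orbit cylinders lying in $\overline W$, their period functions, and hence non-resonance, are the same for $H'$ and for $H$.

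The core step would be to show that for each $i$ the maximal backward orbit cylinder $\mathcal C_i$ of $H'$ ending at $L^{(i)}$ is non-resonant; note $\mathcal C_i\subset\{H'\le c\}=\overline W$. If $\mathcal C_i$ were resonant, Lemma~\ref{l:main} (applicable since $H'$ is proper with finite symplectic area) would yield a non-degenerate local minimum $z_\infty^{(i)}$ of $H'$ and an embedded disc $D_i\subset\overline W$ with boundary $L^{(i)}$, on which $\Phi_{H'}$ acts as a Hamiltonian circle action having $z_\infty^{(i)}$ as its only fixed point. Then $L^{(i)}$ would be null-homotopic in $\overline W$; since a boundary component of a compact connected surface is null-homotopic only when the surface is a disc, $\overline W$ would be a disc and $D_i$ would exhaust it. Hence $z_-\in D_i$; being a critical point of $H'=H$, it would be a fixed point of the action, so $z_-=z_\infty^{(i)}$ would be a non-degenerate minimum of $H$, contrary to hypothesis. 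Therefore each $\mathcal C_i$ contains a non-resonant circle $L^{(i)}_*$, which is then also non-resonant for $H$; if $L^{(i)}$ itself is non-resonant I set $L^{(i)}_*:=L^{(i)}$.

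Finally I would trim $\overline W$: for each $i$ with $L^{(i)}$ resonant I remove from $\overline W$ the half-open annulus in $\mathcal C_i$ consisting of $L^{(i)}$ together with the circles strictly between $L^{(i)}_*$ and $L^{(i)}$; this annulus is open in $\overline W$ (near $L^{(i)}$ it is a one-sided collar of that boundary component). The resulting $V$ is a compact surface with $\p V=\bigcup_i L^{(i)}_*$ a union of non-resonant circles, it satisfies $V\subset\overline W\subset\overline U\subset U$, and $z_-\in\mathring V$ because the removed annuli lie at levels $>c_0=H(z_-)$; in the last case $\overline W$ is a disc, $m=1$, and $V$ is an embedded disc bounded by $L^{(1)}_*$. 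The main obstacle is this reduction itself: $H$ is not proper near $z_-$, so none of Lemmas~\ref{l:emb}--\ref{l:main} or Theorems~\ref{t:dichotomy}--\ref{t:unbounded} applies directly, and one must pass to the proper extension $H'$ while guaranteeing --- by keeping $H'=H$ on all of $\overline W$ --- that the circles produced are non-resonant for the original $H$; the accompanying subtlety is the topological step that turns a resonant backward cylinder into a Hamiltonian circle action on a disc centered at $z_-$, which Lemma~\ref{l:main} forbids at a degenerate critical point.
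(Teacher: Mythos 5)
Your argument is correct and follows the same overall strategy as the paper: localize to a compact regular sublevel set of $H$ near $z_-$, apply Lemma~\ref{l:main} to the backward maximal cylinders emanating from its boundary circles, rule out the resonant alternative using the degeneracy of $z_-$, and trim the boundary down to non-resonant circles. Two technical refinements of yours are worth flagging. First, the capping-off producing a proper $H'$ on a finite-area boundaryless surface $N'$ is a real improvement: the paper invokes Lemma~\ref{l:main} inside $\{H\le c\}\cap U$ even though the ambient $(N,\omega,H)$ in Theorem~\ref{t:z-} is not assumed proper nor of finite symplectic area, so the applicability of the lemma is left tacit, whereas your auxiliary triple $(N',\omega',H')$ satisfies its hypotheses literally while agreeing with $(N,\omega,H)$ on $\overline W$. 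Second, by first passing to the connected component $\overline W$ of $z_-$ you can conclude that \emph{all} boundary circles of $\overline W$ have non-resonant backward cylinders: a closed disc $D_i\subset\overline W$ with $\partial D_i=L^{(i)}\subset\partial\overline W$ is clopen in the connected $\overline W$, hence equals $\overline W$, which would force $z_-$ to be the non-degenerate fixed point of the induced circle action. The paper instead works on the possibly disconnected $\{H\le c\}\cap U$, only shows by a compactness--connectedness argument in $N$ that at least one backward cylinder is non-resonant, and discards the resonant disc components when forming $V$; the resulting $V$ is the same. Your gradient-retraction (equivalently, Euler-characteristic) argument also makes explicit the paper's unexplained ``short topological argument'' for why $V$ may be taken to be a disc when $z_-$ is isolated in the critical set.
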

\begin{proof}
We can assume without loss of generality that $N$ is connected. Since $z_-$ is a strict local minimum, we can choose $c>H(z_-)$ such that $\{H\leq c\}\cap U$ is compact. By Sard's theorem, we can suppose that $c$ is a regular value of $H$ and let $L_c^{(1)},\ldots, L_c^{(k)}$ be the connected components of $\{H=c\}\cap U$. Let \[
S:=\{i\in \{1,\ldots,k\}\ |\ \text{the maximal backward cylinder of $L_c^{(i)}$ is resonant}\}.
\]
For each $i\in S$ there exists a closed embedded disc $D_i\subset \{H\leq c\}\cap U$ containing the maximal cylinder. Since $z_-$ is degenerate, $z_-\notin D_i$ by Lemma~\ref{l:main}. If $S=\{1,\ldots,k\}$, then $U':=(\{H\leq c\}\cap U)\setminus\bigcup_{i=1}^kD_i$ is both open in $U$ and compact. Since $N$ is connected and $z_-\in U'$, there holds $U'=N$ which is a contradiction since $D_i\subset N\setminus U'$ for all $i=1,\ldots,k$. Therefore $S\neq\{1,\ldots,k\}$ and we can construct the required $V$ as in the proof of Theorem~\ref{t:unbounded}. If $z_-$ is also isolated as critical point then a short topological argument shows that $\{H\leq c\}\cap U$ can be taken to be diffeomorphic to a disc.
\end{proof}

\begin{thm}\label{t:L'}
	Let $(N,\omega)$ be a symplectic surface without boundary, and let $H:N\to\R$ be a Hamiltonian having an isolated minimum at an embedded circle $L'\subset N$. Then for every neighborhood $U'$ of $L'$ there exists a neighborhood $V\subset U'$ of $L'$ such that $\p V$ is a union of non-resonant circles. If $L'$ is also isolated in the set of critical points of $H$, then $V$ can be taken to be a tubular neighborhood of $L'$.
\end{thm}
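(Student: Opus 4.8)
The plan is to run the scheme of Theorems~\ref{t:unbounded} and~\ref{t:z-}, with $L'$ taking the role that the degenerate minimum point $z_-$ plays in Theorem~\ref{t:z-}. Set $m:=H|_{L'}$. The one genuinely new ingredient I would isolate first is a \emph{degeneracy} observation: every $z\in L'$ is a critical point of $H$ at which $\mathrm{Hess}_zH$ is degenerate, since $H\equiv m$ along $L'$ forces $T_zL'\subseteq\ker\mathrm{Hess}_zH$. Combined with Lemma~\ref{l:main} this yields the fact that drives the whole proof: \emph{no embedded disc produced by Lemma~\ref{l:main} can meet $L'$}. Indeed such a disc $D$ contains a single critical point of $H$, and that point is non-degenerate, whereas every point of $D\cap L'$ would be a critical point of $H$ in $D$, hence a degenerate one. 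This is the precise analogue of the step ``$z_-\notin D_i$'' in Theorem~\ref{t:z-}.

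Next I would pass to a finite-area ambient surface. After shrinking $U'$ so that $\overline{U'}$ is compact, I use that $L'$ is an isolated connected component of $\{H=m\}$ to choose $c_1>m$ so close to $m$ that the connected component $U$ of $\{H<c_1\}$ containing $L'$ has $\overline U$ compact and $\overline U\subseteq U'$; for the last assertion I additionally take $c_1$ small enough that $U$ contains no critical point of $H$ outside $L'$. Then $U$ is a connected symplectic surface without boundary of finite area, $H|_U$ is proper on $U$, and $H|_U$ has minimum $m$ attained exactly on $L'$, so the results of this section apply to $(U,\omega,H|_U)$. Pick a regular value $c_0\in(m,c_1)$ of $H|_U$ (Sard), let $\Sigma_0$ be the connected component of the compact set $\{H\le c_0\}\cap U$ containing $L'$, let $L^{(1)},\dots,L^{(k)}$ be the boundary circles of $\Sigma_0$, and set $S:=\{\,i:\text{the maximal backward cylinder of }L^{(i)}\text{ is resonant}\,\}$. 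For $i\in S$, Lemma~\ref{l:main} provides an embedded disc $D_i\subseteq\{H\le c_0\}\cap U$ with $\partial D_i=L^{(i)}$, and $D_i\cap L'=\emptyset$ by the first paragraph. As in Theorem~\ref{t:z-}, one then shows $S\neq\{1,\dots,k\}$: otherwise $(\{H\le c_0\}\cap U)\setminus\bigcup_{i\in S}D_i$ is a non-empty (it contains $L'$), open and compact --- hence clopen --- subset of the connected surface $U$, so it equals $U$, contradicting $D_i\subseteq U$.

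To conclude the first assertion, for each $i\notin S$ I would choose a non-resonant circle $\Lambda^{(i)}$ in the (necessarily non-resonant) maximal backward cylinder of $L^{(i)}$, let $W^{(i)}$ be the open collar of that cylinder bounded by $\Lambda^{(i)}$ and $L^{(i)}$, and let $V$ be the connected component containing $L'$ of the open set
\[
\Bigl(\{H<c_0\}\cap U\Bigr)\setminus\Bigl(\bigcup_{i\in S}D_i\cup\bigcup_{i\notin S}\overline{W^{(i)}}\Bigr).
\]
Exactly as in Theorem~\ref{t:z-}, $V$ is then an open neighborhood of $L'$ contained in $U\subseteq U'$ whose boundary is a union of non-resonant circles (a subset of $\{\Lambda^{(i)}\}_{i\notin S}$). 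For the refinement, under the extra hypothesis that $U$ has no critical point off $L'$, the region $\{m<H\le c_0\}\cap U$ is a gradient-flow collar $\partial\Sigma_0\times(0,1]$ of $\partial\Sigma_0$, so $\Sigma_0=\{H\le c_0\}\cap U$ has vanishing Euler characteristic and, being compact, connected, orientable and with non-empty boundary (the Möbius band being excluded by orientability), is a closed annulus with $L'$ isotopic to its core and $\partial\Sigma_0=L^{(1)}\sqcup L^{(2)}$. Here $S=\emptyset$, since a disc $D_i\subseteq\{H\le c_0\}\cap U$ with $\partial D_i=L^{(i)}$ is impossible: its unique interior critical point would be a critical point of $H$ in $U$ off $L'$, equivalently, $L^{(i)}$ is essential in the annulus. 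Choosing non-resonant circles $\Lambda^{(1)},\Lambda^{(2)}$ in the two backward cylinders and taking $V$ to be the sub-annulus of $\Sigma_0$ that they cobound together with $L'$ produces the desired tubular neighborhood of $L'$, with $\partial V=\Lambda^{(1)}\sqcup\Lambda^{(2)}$ non-resonant.

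I expect the difficulty to be bookkeeping rather than conceptual: once the degeneracy observation and the finite-area reduction are in place, Lemma~\ref{l:main} supplies all the substance, and the points that need care --- checking that the set obtained after removing the discs and collars is open and compact in $U$ (so the connectedness contradiction runs), ensuring the maximal backward cylinders of the $L^{(i)}$ stay in the compact region $\overline U$ where Lemma~\ref{l:main} applies, and identifying $\Sigma_0$ with an annulus in the isolated case --- are routine, carried out as in Theorems~\ref{t:unbounded} and~\ref{t:z-}.
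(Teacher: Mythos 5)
Your proposal is correct and follows essentially the same route as the paper: the paper's proof is just the one-line remark that the argument of Theorem~\ref{t:z-} goes through verbatim once one observes $L'\cap D_i=\varnothing$, and you identify precisely this as the driving fact (via the degeneracy of the Hessian along $L'$ against the non-degeneracy of the unique critical point supplied by Lemma~\ref{l:main}). The only nit is that, as stated, $H|_U:U\to\R$ need not be literally proper since sequences with $H\to c_1^-$ can escape every compact subset of $U$; what you actually use (and what suffices, exactly as in the paper's proof of Theorem~\ref{t:z-}) is that sublevel sets $\{H\le c\}\cap U$ are compact for $c<c_1$, which is enough to run Lemma~\ref{l:main} on backward cylinders and to bound areas.
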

\begin{proof}
The argument is identical to that of Theorem~\ref{t:z-} using that $L'\cap D_i=\varnothing$ for all $i\in S$, where the discs $D_i$ are defined as in the proof above.
\end{proof}
\begin{thm}
\label{t:ultimo}
Let $(N,\omega)$ be a symplectic surface without boundary and let $H:N\to\R$ be a proper Hamiltonian having a non-degenerate saddle point at $z_1\in N$ with $c:=H(z_1)$. Consider a positive chart $(x,y):U\to (-\delta_0,+\delta_0)^2$ centered at $z_1$ such that $H(x,y)=c+xy$ for some $\delta_0>0$. For every $\epsilon<\delta<\delta_0$ there exist numbers $\epsilon_1,\epsilon_2,\epsilon_3,\epsilon_4\in(0,\epsilon)$ such that the following four sets belong to non-resonant circles
\begin{align*}
B_1&:=\big\{ H(x,y)=c+\delta\epsilon_1,\ x>0,\ y>0\big\},&B_2&:=\big\{ H(x,y)=c-\delta\epsilon_2,\ -x>0,\ y>0\big\},\\
B_3&:=\big\{ H(x,y)=c+\delta\epsilon_3,\ -x>0,\ -y>0\big\},
&B_4&:=\big\{ H(x,y)=c-\delta\epsilon_4,\ x>0,\ -y>0\big\}.
\end{align*}
\end{thm}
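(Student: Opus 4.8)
The plan is to handle the four arcs one at a time — by the symmetry of the four quadrants it is enough to explain $B_1$ — and to reduce the claim to producing, for some $\epsilon_1\in(0,\epsilon)$, a non-resonant circle of $H$ at level $c+\delta\epsilon_1$ (where $c:=H(z_1)$) whose trace in the chart $U$ contains the first-quadrant hyperbola arc; indeed $B_1$ is exactly that arc. The mechanism is the dichotomy of Lemma~\ref{l:main}: an orbit cylinder either is non-resonant, or it collapses onto a non-degenerate local extremum of $H$. The point is that the level curves of $H$ issuing from the saddle inside a fixed quadrant of $U$ cannot collapse to a point, so the cylinders carrying them must be non-resonant.

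Concretely, I would first use Sard's theorem to pick a regular value $c+t$ of $H$ with $0<t<\delta\epsilon$. Since $\delta\epsilon<\delta^2<\delta_0^2$, for every $c'\in(c,c+t]$ the set $\alpha_{c'-c}:=\{H=c',\,x>0,\,y>0\}$ is a connected arc in $U$ joining two sides of the square $(-\delta_0,\delta_0)^2$, of chart diameter bounded below by a positive constant $d_0$ depending only on $\delta_0$ and $\delta\epsilon$ (namely $d_0=\sqrt2\,(\delta_0-\delta\epsilon/\delta_0)$). Let $L_{c+t}$ be the connected component of $\{H=c+t\}$ containing $\alpha_t$; it is an embedded circle since $H$ is proper. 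Let $c\mapsto L_c$, $c\in(c_-,c_+)$, be the maximal orbit cylinder through $L_{c+t}$ furnished by Lemma~\ref{l:emb}, and consider its backward part over $(c_-,c+t]$, i.e.\ decrease the level towards $c$.

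The argument then rests on two claims. First, $c_-\ge c$ and every circle $L_{c'}$, $c'\in(c_-,c+t]$, contains $\alpha_{c'-c}$: the set of $c'\in(c_-,c+t]$ with $c'>c$ and $\alpha_{c'-c}\subseteq L_{c'}$ is non-empty, and open and closed in $(\max(c,c_-),c+t]$ by continuity of the orbit-cylinder isotopy (for $c'>c$ the arc $\alpha_{c'-c}$ is a connected component of $\{H=c'\}\cap U$ varying continuously with $c'$), hence equals it; and if $c_-<c$, then letting $c'\downarrow c$ would force $z_1\in L_c$, contradicting that $L_c$ is a regular circle. In particular $U_{c_-,c+t}\subset H^{-1}([c,c+t])$, which is compact, so the argument of Lemma~\ref{l:main} applies to this cylinder (the finite-area hypothesis there is only used to confine the cylinder's parameter interval, which we have just done by hand). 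Second, the backward cylinder is non-resonant: were it resonant, Lemma~\ref{l:main} — precisely its shrinking estimate \eqref{e:dist} — would produce a non-degenerate local extremum $z_\infty$ with $\sup_{z\in L_{c'}}\dist(z,z_\infty)\to0$, whence $\diam L_{c'}\to0$, contradicting $\diam L_{c'}\ge\diam\alpha_{c'-c}\ge d_0>0$. Hence the backward cylinder contains a non-resonant circle $L_{c_*}$ with $c_*\in(c,c+t]\subset(c,c+\delta\epsilon)$; putting $\epsilon_1:=(c_*-c)/\delta\in(0,\epsilon)$ we obtain $B_1=\alpha_{c_*-c}\subseteq L_{c_*}$, a non-resonant circle. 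For $B_3$ one repeats the same with the third quadrant $\{x<0,\,y<0\}$ (still decreasing the level to $c$), and for $B_2,B_4$ one uses the second and fourth quadrants at levels below $c$, increasing towards $c$ — i.e.\ the forward part of the corresponding cylinder and the local-maximum case of Lemma~\ref{l:main}.

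The diameter estimate is routine. The delicate point is the first claim: tracking that the circles of the orbit cylinder keep containing the prescribed quadrant arc all the way down to the saddle level $c$ and no further, together with the bookkeeping that keeps the levels inside $(c,c+\delta\epsilon)$ — this is exactly why one must start from a regular value within $\delta\epsilon$ of $c$, and it is what guarantees $\epsilon_i<\epsilon$. As an alternative to invoking \eqref{e:dist} in the second claim (useful when $c_-=c$), one may observe that, $z_1$ being a hyperbolic rest point of $X_H$, the period function of the cylinder diverges logarithmically as the level tends to $c$ and hence cannot be locally constant; but the shrinking argument above handles the cases $c_-=c$ and $c_->c$ uniformly.
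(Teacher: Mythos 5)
Your proposal follows essentially the same route as the paper's (very terse) proof: Sard gives a regular level $c+\delta\epsilon'$ just above $c$, the first-quadrant hyperbola arc sits inside a unique embedded circle $L_{c+\delta\epsilon'}$, and the backward maximal orbit cylinder through it must contain a non-resonant circle because a resonant one would, by Lemma~\ref{l:main}, collapse to a non-degenerate extremum, which is blocked by the uniform lower bound on the diameter of the quadrant arcs. Your write-up is more careful than the paper's two-line argument on two points that the paper leaves implicit and are worth making explicit: (i) the backward cylinder cannot cross the level $c$ (otherwise the saddle would lie on a regular circle), which both bounds the cylinder interval and guarantees $\epsilon_1>0$; and (ii) since Theorem~\ref{t:ultimo} does not assume $(N,\omega)$ has finite area, one must note that this hypothesis of Lemma~\ref{l:main} is used only to bound the parameter interval, which point (i) supplies by hand. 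These are genuine clarifications of the paper's proof rather than a different argument.
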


\begin{proof}
Define $C_{\epsilon'}:=\{H(x,y)=c+\delta\epsilon',\ x>0,\ y>0\}\subset N$ for some $\epsilon'\in(0,\epsilon)$ such that $c+\delta\epsilon'$ is a regular value of $H$ which exists by Sard's theorem. Let $L_{c+\delta\epsilon'}$ be the embedded circle containing $C_{\epsilon'}$. By Lemma~\ref{l:main} the maximal backward orbit cylinder through $L_{c+\delta\epsilon'}$ contains a non-resonant circle $L_{c+\delta\epsilon_1}$ for some $\epsilon_1\in(0,\epsilon']$. In a similar manner we construct $\epsilon_2,\epsilon_3,\epsilon_4$.
\end{proof}

\section{The Moser twist theorem: Trapping regions and Poincar\'e--Birkhoff orbits}

\label{s:Moser}

In this section we recall how to use non-resonant circles to prove the existence of trapping regions and periodic orbits for small perturbations of an autonomous Hamiltonian system given by a proper Hamiltonian $H:N\to\R$ on a symplectic surface $(N,\omega)$ without boundary. More precisely, for natural numbers $k<l$ consider a one-parameter family of time-dependent Hamiltonian functions
\begin{equation}\label{e:Hs}
H_s:N\times\T\to\R,\qquad H_s(z,t):=s^kH(z)+s^lR_s(z,t),\quad \forall\,(z,t)\in N\times \T, 
\end{equation}
where $R_s:N\times \T\to\R$ is a perturbation depending smoothly on the parameter $s\in[0,s_1)$ and $2\pi$-periodically on the time $t\in\T$. Let $X_{s,t}$ be the time-dependent Hamiltonian vector field associated with the function $z\mapsto H_s(z,t)$ and consider the map $\varphi_{t_0,t_1,s}:N_s\to N$ such that $\varphi_{t_0,t_1,s}(z)=z(t_1)$ where $z(t)$ is the unique solution of
\begin{equation}\label{e:nonaut}
\dot z(t)=X_{s,t}(z(t))
\end{equation}
with $z(t_0)=z$. The open set $N_s$ where $\varphi_{t_0,t_1,s}$ is defined contains an arbitrary compact subset of $N$ when $s$ is sufficiently small since $H_s$ depends periodically on time. 

An application of the Moser twist theorem and of the Poincar\'e--Birkhoff theorem yields the following classical result.
\begin{thm}\label{t:Moser}
Let $L\subset N$ be a non-resonant circle for $H$. For all neighborhoods $U$ of $L$, there exists $s_*\in(0,s_1)$ and a neighborhood $U'$ of $L$ contained in $U$ with the property that for all $s\in[0,s_*)$ there holds
\begin{enumerate}[(1)]
\item there exist infinitely many solutions of \eqref{e:nonaut} whose period is a multiple of $2\pi$ and whose free-homotopy class is a multiple of $L$,
\item every solution of \eqref{e:nonaut} starting in $U'$ at some time $t\in\R$ stays in $U$ for all times.
\end{enumerate}
\end{thm}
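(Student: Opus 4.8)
The plan is to normalise the autonomous system to action--angle form near $L$, observe that the time-$2\pi$ return map of \eqref{e:nonaut} is then a \emph{small-twist} perturbation of an integrable shear, and conclude via Moser's invariant curve theorem and the Poincar\'e--Birkhoff theorem; all ``for $s$ small'' statements below hold for $s<s_*$ with a suitable $s_*\in(0,s_1)$. Given $U$, fix an annular neighbourhood $\Lambda$ of $L$ with $\Lambda\subset U$. By Lemma~\ref{l:emb} the circle $L=L_{c_0}$ lies in a maximal orbit cylinder $c\mapsto L_c$, and after shrinking its parameter interval we may take $\Lambda$ to be foliated by the periodic orbits $L_c$. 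With $T(c)$ the period of $L_c$ and $A(c)=\int_{U_{c_0,c}}\omega$ the area function, Lemma~\ref{l:ap} gives $A'(c)=T(c)>0$, so $A$ is a diffeomorphism onto its image; set $J:=A\circ H$, a first integral on $\Lambda$, and let $\theta\in\R/\Z$ be an angle obtained by flowing a transversal along $X_J=T(H)X_H$, whose orbits all have period one. One checks that in the coordinates $(J,\theta)$ one has $\omega=\diff J\wedge\diff\theta$ and $H=h(J)$ with $h=A^{-1}$, whence $h'(J)=1/T$ and $h''(J_0)=-T'(c_0)/T(c_0)^3$. Thus \emph{$L$ is non-resonant exactly when $h''(J_0)\neq0$}, and after shrinking $\Lambda$ we may assume $|h''|\geq c_1>0$ on $\overline\Lambda$.

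In these coordinates \eqref{e:Hs} becomes $H_s=s^kh(J)+s^l\widetilde R_s(J,\theta,t)$, so its flow satisfies $\dot J=-s^l\p_\theta\widetilde R_s=O(s^l)$ and $\dot\theta=s^kh'(J)+O(s^l)$. Integrating over $[0,2\pi]$, the return map $\varphi:=\varphi_{0,2\pi,s}$ — exact area-preserving and, by the remark preceding the theorem, well defined on $\overline\Lambda$ for $s$ small — takes on a slightly smaller annulus $\Lambda'\Subset\Lambda$ the form
\[
\varphi(J,\theta)=\bigl(J+s^kg_s(J,\theta),\;\theta+s^k\alpha(J)+s^kf_s(J,\theta)\bigr),
\]
with $\alpha(J):=2\pi h'(J)$ satisfying $|\alpha'|=2\pi|h''|\geq 2\pi c_1$ and $\|f_s\|_{C^r}+\|g_s\|_{C^r}=O(s^{l-k})\to0$ for every fixed $r$. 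This is exactly the setting of Moser's invariant curve theorem in its \emph{small-twist} form \cite{Mos62}: for $s$ small there exist $\varphi$-invariant curves which are Lipschitz graphs over $\theta$, whose rotation numbers fill a set of positive measure and accumulate on both components of $\p\Lambda'$. Pick two of them, $\mathcal C_-\subset\{J<J_0\}$ and $\mathcal C_+\subset\{J>J_0\}$, with distinct rotation numbers $\rho_\pm$ — possible because the rotation number varies strictly monotonically in $J$, being close to $s^kh'(J)$ with $h''\neq0$ — chosen close enough to $L$ that the closed sub-annulus $Z$ they cobound lies in $\Lambda'$ while $\mathring Z$ is a neighbourhood of $L$; then $\varphi(Z)=Z$. \emph{I expect this to be the only genuinely delicate step:} since the leading term of $H_s$ carries the prefactor $s^k$, the unperturbed twist is itself only of size $s^k$, a mere factor $s^{l-k}$ larger than the perturbation, so one cannot invoke the naive twist theorem and must use the quantitative ``small twist'' version in which the admissible perturbation, measured against the twist, is the relevant small parameter — here that parameter is $s^{l-k}\to0$, which is where $l>k$ enters.

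\emph{Trapping (part (2)).} Suspend $\varphi$ by regarding the flow of $X_{s,t}$ on $\Lambda\times\T$. Since $\varphi(\mathcal C_\pm)=\mathcal C_\pm$ and $H_s$ is $2\pi$-periodic in $t$, the sets $\Sigma_\pm:=\bigcup_{t\in\T}\varphi_{0,t,s}(\mathcal C_\pm)\times\{t\}$ are compact, flow-invariant $2$-tori, and the open region $\mathcal R\subset\Lambda\times\T$ bounded by them is flow-invariant with compact closure. Its projection to $N$ is $\bigcup_{t\in[0,2\pi]}\varphi_{0,t,s}(Z)$, and since in the coordinates $(J,\theta)$ the flow displaces $J$ by only $O(s^l)$ over $[0,2\pi]$, this projection lies in any prescribed neighbourhood of $Z$, hence in $\Lambda\subset U$ for $s$ small. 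As $L\times\T$ is a compact subset of the open set $\mathcal R$, there is a neighbourhood $U'\subset\mathring Z\subset U$ of $L$ with $U'\times\T\subset\mathcal R$. A solution of \eqref{e:nonaut} through a point of $U'$ at some time $t_0$ then has its graph $t\mapsto(z(t),t\bmod2\pi)$ contained in $\mathcal R$ for all $t$ — in particular it exists for all $t\in\R$ — so $z(t)\in\pi_N(\mathcal R)\subset U$.

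\emph{Periodic orbits (part (1)).} The restriction $\varphi|_Z$ is an exact area-preserving map of the closed annulus $Z$ fixing each boundary circle, with twist $\p_J\bigl(\theta+s^k\alpha(J)+s^kf_s\bigr)=2\pi s^kh''(J)+O(s^l)$ of constant sign for $s$ small (again using $l>k$); hence $\varphi|_Z$ is a monotone twist map whose boundary rotation numbers $\rho_\pm$ are distinct and, being close to $s^kh'(J_0)=s^k/T(c_0)>0$, positive. By the Poincar\'e--Birkhoff theorem \cite{Gole:2001vx}, for every $p/q$ in lowest terms lying strictly between $\rho_-$ and $\rho_+$ there are at least two $q$-periodic orbits of $\varphi|_Z$ of rotation number $p/q$; letting $q\to\infty$ along such fractions yields infinitely many distinct periodic orbits of $\varphi$ inside $Z$. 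A $q$-periodic orbit of $\varphi$ of rotation number $p/q$ corresponds to a solution of \eqref{e:nonaut} of period $2\pi q$ which over one period winds $p\geq1$ times around $Z$, hence is freely homotopic in $N$ to $pL$ since $L$ is the core of $Z$. This proves (1); the case $s=0$ is immediate since then $H_0\equiv0$.
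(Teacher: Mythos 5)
Your argument is essentially the same as the paper's: pass to action--angle coordinates on an annular neighbourhood of $L$ in which the time-$2\pi$ return map of \eqref{e:nonaut} is an exact area-preserving small-twist perturbation of an integrable shear, apply Moser's invariant-curve theorem to produce two $\varphi_s$-invariant circles on either side of $L$, use their suspension tori for the trapping statement, and apply Poincar\'e--Birkhoff in the annulus they cobound to obtain infinitely many periodic orbits with rational rotation numbers between the two boundary values. Your explicit construction of $J=A\circ H$ and the identification $h''(J_0)=-T'(c_0)/T(c_0)^3\neq0$ is a slightly more concrete version of the paper's appeal to \cite{Arnold:1978aq} and to the equivalence, noted in the paper without derivation, between non-resonance and $f'(r_0)\neq0$.

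One thing you flag that the paper leaves implicit is the observation that the unperturbed twist is itself only $O(s^k)$, so that the correct small parameter in Moser's theorem is $s^{l-k}$ rather than $s^l$; this is indeed the small-twist regime of \cite{Mos62}, and making it explicit is a genuine improvement in exposition. A minor point to tighten: as written, your sub-annulus $Z$, and hence the neighbourhood $U'\subset\mathring Z$, depends on $s$, whereas the theorem asks for a single $U'$ working for all $s<s_*$. The fix is the one the paper uses: fix $J_-<J_0<J_+$ once and for all; since the invariant curves $\mathcal C_\pm$ converge to $\{J=J_\pm\}$ as $s\to0$, they stay outside any fixed $\{J\in[\tilde J_-,\tilde J_+]\}$ with $J_-<\tilde J_-<J_0<\tilde J_+<J_+$ once $s$ is small, and this fixed region serves as $U'$ uniformly in $s$.
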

\begin{proof}
Let $(r,\theta)\in I\times \T$ be action-angle coordinates of $\omega$ in a tubular neighborhood of $L$ where $I$ is some open interval. These coordinates can be chosen so that the level sets of $r$ are embedded circles of $H$ and $L=\{r=r_0\}$ for some $r_0\in I$ \cite{Arnold:1978aq}. Let $\varphi_s:=\varphi_{0,2\pi,s}$. There exists a function $f:I\to\R$ so that in these coordinates we can write $\varphi_s(r,\theta)=(r_s,\theta_s)$, where
\begin{equation}\label{e:rtheta}
\begin{aligned}
r_s&=r+O(s^l),\\
\theta_s&=\theta+s^kf(r)+O(s^l).
\end{aligned}
\end{equation}
The condition of $L$ being non-resonant is equivalent to $f'(r_0)\neq0$ \cite{AB21}. Therefore there exists a sub-interval $[r_-,r_+]$ containing $r_0$ in its interior such that $f'(r)\neq 0$ for every $r\in[r_-,r_+]$ and $\{(r,\theta)\in [r_-,r_+]\times \T\}\subset U$.

Since $\varphi_s$ is a Hamiltonian diffeomorphism, it is exact symplectic. In other words, there exists a function $W_s:I\times\T\to\R$ such that
\[
\varphi_s^*(r_s\diff\theta_s)=r\diff\theta+\diff W_s.
\]
Thus we can apply the Moser twist theorem \cite{Mos62} at $r_-$ and $r_+$ to find two $\varphi_s$-invariant closed curves $\beta_{s,r_-}$, $\beta_{s,r_+}$ arbitrarily close to $\{r=r_-\}$, respectively $\{r=r_+\}$, for $s$ small enough. Thus we can assume that $\beta_{s,r_-}$ and $\beta_{s,r_+}$ bound an annular region $A_s\subset U$ which is invariant under $\varphi_s$ and contains $U':=\{(r,\theta)\in[\tilde r_-,\tilde r_+]\times \T\}$ in its interior for some $\tilde r_-<r_0<\tilde r_+$.

Let us show (2) in the statement of the theorem. For every $t\in\R$ let $A_{t,s}=\varphi_{0,t,s}(A_s)$ so that there holds
\[
\varphi_{t,t_1,s}(A_{t,s})=A_{t_1,s}\qquad \forall\,t,t_1\in\R.
\]
Since $H_s$ depends periodically on time and $r$ is constant along the flow of $H_0$, there holds $U'\subset A_{t,s}\subset U$ for all $s$ small enough and for all $t\in\R$. Thus a solution of \eqref{e:nonaut} passing through $U'$ stays in $U$ for all times.

Let us show (1) in the statement of the theorem. Consider the restriction of $\varphi_s$ to the closed annulus $A_s$. Recall the definition of the rotation number of a point $z\in A_s$
\[
\rho_s(z) := \lim_{n\to +\infty} \frac{\tilde \varphi^n_s(z)-z}{n}
\]
where $\tilde \varphi_s$ denotes a lift of $\varphi_s$ to the universal cover $\tilde A_s$ of $A_s$. If $z_\pm\in\beta_{s,r_\pm}$, then \eqref{e:rtheta} together with the fact that $\beta_{s,r_\pm}$ is close to $\{r=r_\pm\}$ implies that
\[
\rho_{s,\pm}:=\rho_s(z_\pm) = s^k2\pi f(r_\pm) + o(s^k)
\]
In particular, for $s$ small enough the rotation numbers $\rho_{s,-}$ and $\rho_{s,+}$ are different and by the Poincar\'e--Birkhoff fixed point theorem \cite[Theorem 7.1]{Gole:2001vx} the map $\varphi_s|_{A_s}$ has a periodic point with rotation number $p/q$ for every rational number $p/q$ lying between $\rho_{s,-}$ and $\rho_{s,+}$.
\end{proof}

Combining Theorem~\ref{t:Moser} with Theorem~\ref{t:dichotomy} we get the existence of infinitely many periodic orbits and trapping regions for $H_s$ if $(N,\omega)$ is a connected, symplectic surface of finite area such that either $N\not\cong S^2$ or $N\cong S^2$ and $H$ does not give a Hamiltonian circle action. Combining Theorem~\ref{t:Moser} with Theorems~\ref{t:unbounded},~\ref{t:z-}, and~\ref{t:L'}, we get more precise information on the location of the periodic orbits and the trapping regions.
\begin{thm}\label{t:three}
If $H$ is unbounded from above (resp.~below), or has a degenerate strict local minimum (resp.~maximum) $z_0\in N$, or has a strict local minimum (resp.~maximum) at an embedded circle $L'$, then arbitrarily small neighborhoods of $\{H=+\infty\}$, $z_0$ or $L'$ are trapping regions for the orbits of \eqref{e:nonaut} and contain infinitely many periodic orbits of \eqref{e:nonaut} for every $s$ small enough. \qed
\end{thm}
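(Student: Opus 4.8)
The plan is to combine, essentially verbatim, the results of Section~\ref{s:nonresonant} (Theorems~\ref{t:unbounded}, \ref{t:z-}, \ref{t:L'}) with the perturbative argument of Theorem~\ref{t:Moser}. I carry out the case in which $H$ is unbounded from above; the cases of a degenerate strict local minimum $z_0$ and of a strict-local-minimum circle $L'$ are identical, invoking Theorem~\ref{t:z-} or Theorem~\ref{t:L'} in place of Theorem~\ref{t:unbounded}, and are in fact slightly easier because the neighborhoods produced there are relatively compact, so the completeness point raised at the end does not occur.

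First, given a neighborhood $\mathcal U$ of $\{H=+\infty\}$, I would choose $c_0\in\R$ with $\{H\ge c_0\}\subset\mathcal U$ and apply Theorem~\ref{t:unbounded} with this $c_0$, obtaining a neighborhood $V_0\subset\{H>c_0\}\subset\mathcal U$ of $\{H=+\infty\}$ whose boundary is a finite disjoint union of non-resonant circles $L_1,\dots,L_m$ (finite, since $\partial V_0$ is compact and $H$ is proper). Near each $L_j$ I would pass to action-angle coordinates $(r,\theta)$ as in the proof of Theorem~\ref{t:Moser}, normalised so that $L_j=\{r=0\}$, the level sets of $r$ are embedded circles of $H$, and $V_0$ is locally $\{r>0\}$; since $L_j$ is non-resonant, the twist function $f_j$ of \eqref{e:rtheta} has $f_j'(0)\neq0$, hence $f_j'\neq0$ on a two-sided neighborhood of $0$. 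I would fix inside that neighborhood three nested circles $\{r=a_j'\}$, $\{r=a_j\}$, $\{r=b_j\}$ with $0<a_j'<a_j<b_j$, with the collars $\{0\le r\le b_j\}$ pairwise disjoint, and run the Moser twist theorem at these three circles exactly as in the proof of Theorem~\ref{t:Moser}: for all sufficiently small $s$ this gives $\varphi_s$-invariant circles $\beta_{j,s}'$, $\beta_{j,s}$, $\beta_{j,s}''$ arbitrarily $C^0$-close to $\{r=a_j'\}$, $\{r=a_j\}$, $\{r=b_j\}$, the last two bounding a $\varphi_s$-invariant annulus $A_{j,s}$ with distinct boundary rotation numbers; by the Poincar\'e--Birkhoff theorem $\varphi_s|_{A_{j,s}}$ then has infinitely many periodic points, i.e.\ infinitely many solutions of \eqref{e:nonaut} of period a multiple of $2\pi$, all contained (for $s$ small) in the fixed neighborhood $W:=V_0\setminus\bigcup_j\{0<r\le a_j''\}$ of $\{H=+\infty\}$, where $a_j''\in(a_j',a_j)$ is any fixed choice.

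It then remains to see that $W$ is a trapping region, which I would get by repeating the periodicity-in-time argument of Theorem~\ref{t:Moser}(2) with the innermost invariant circles. Setting $\beta_{j,s,t}:=\varphi_{0,t,s}(\beta_{j,s}')$, these are defined for all $t$ once $s$ is small (each $\beta_{j,s}'$ is compact and $X_{s,t}=O(s^k)$), are $2\pi$-periodic in $t$ because $H_s$ is, and stay uniformly $C^0$-close to $\{r=a_j'\}$, hence strictly inside the collars removed from $V_0$; the connected component $V_{t,s}$ of $N\setminus\bigcup_j\beta_{j,s,t}$ containing $\{H=+\infty\}$ is then $2\pi$-periodic in $t$, satisfies $W\subset V_{t,s}\subset V_0\subset\mathcal U$ for $s$ small, and obeys $\varphi_{t,t_1,s}(V_{t,s})=V_{t_1,s}$ whenever both sides are defined. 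Therefore $\{(t,z)\in\T\times N:z\in V_{t,s}\}$ is invariant under the flow of \eqref{e:nonaut} on $\T\times N$, so every solution of \eqref{e:nonaut} that at some instant lies in $W$ remains in $\mathcal U$ for all times in its maximal interval of definition. Since $\mathcal U$ was an arbitrary neighborhood of $\{H=+\infty\}$, this exhibits arbitrarily small neighborhoods of $\{H=+\infty\}$ that are trapping regions and contain infinitely many periodic orbits for every sufficiently small $s$.

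I do not expect a real conceptual obstacle here: the argument is the two cited theorems plus the bookkeeping above, the mild technical choices being those that force the Poincar\'e--Birkhoff orbits to land inside the prescribed neighborhood and that handle the several boundary circles at once. The one point needing a little care, in the unbounded case, is that the flow of $H_s$ is a priori incomplete near $\{H=+\infty\}$; this is harmless, because confinement is enforced by the compact invariant tori $\{(t,z):z\in\beta_{j,s,t}\}\subset\T\times N$, whose flow is defined for $s$ small, so completeness at infinity is never used. In the $z_0$ and $L'$ cases even this does not arise, the relevant neighborhoods being relatively compact.
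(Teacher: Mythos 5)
Your proposal is correct and follows exactly the route the paper intends: the paper proves Theorem~\ref{t:three} in one line by citing Theorem~\ref{t:Moser} together with Theorems~\ref{t:unbounded}, \ref{t:z-} and \ref{t:L'}, and your argument is precisely the bookkeeping that citation elides --- bounding non-resonant circles from the Section~\ref{s:nonresonant} results, then Moser twist plus Poincar\'e--Birkhoff near each of them, with the invariant circles serving as one-sided barriers. The additional points of care you raise (forcing the Poincar\'e--Birkhoff orbits into the prescribed neighborhood, and noting that completeness of the flow near $\{H=+\infty\}$ is unnecessary since confinement is enforced by the compact invariant tori) are sound and correctly handled.
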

Finally, we apply Theorem~\ref{t:ultimo} to study the stability properties of non-degenerate saddle points of $H$. 

\begin{thm}\label{t:saddle}
Let $(N,\omega)$ be a symplectic surface without boundary, $H:N\to\R$ a proper Hamiltonian with a non-degenerate saddle point $z_1$. Under the notation of Theorem~\ref{t:ultimo}, for every $\epsilon<\delta<\delta_0$ define 
\[
U_\delta:=\big\{(x,y)\in(-\delta,\delta)^2\big\},\quad A_{\delta,\epsilon}^+:=\big\{|x|=\delta,\ |y|\leq \epsilon\big\},\quad A_{\delta,\epsilon}^-:=\big\{|y|=\delta,\ |x|\leq \epsilon\big\}.
\]
Then for every $\epsilon<\delta<\delta_0$ there exists $\delta'<\epsilon$ and $s_*>0$ such that for all $s<s_*$ every orbit $z$ of \eqref{e:nonaut} with $z(t_0)\in U_{\delta'}$ for some $t_0\in \R$ has the following property: There exist $t_-\in[-\infty,t_0)$ and $t_+\in(t_0,+\infty]$ such that $z(t)\in U_\delta$ for all $t\in(t_-,t_+)$. Moreover, either $t_\pm=\pm\infty$ or $z(t_\pm)\in A^\pm_{\delta,\epsilon}$.  
\end{thm}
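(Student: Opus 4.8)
The plan is to combine Theorem~\ref{t:ultimo}, which supplies non-resonant circles of $H$ hugging the four quadrants around $z_1$, with Theorem~\ref{t:Moser}, which upgrades each of them to an invariant barrier for the non-autonomous flow \eqref{e:nonaut}; the side through which an orbit escapes is then read off directly from the normal form $H=c+xy$. First I would apply Theorem~\ref{t:ultimo} to $H$ with the given $\epsilon<\delta<\delta_0$ to get $\epsilon_1,\dots,\epsilon_4\in(0,\epsilon)$ and embedded circles $L_1,\dots,L_4$ of $H$ containing the arcs $B_1,\dots,B_4$. In the chart each $B_i$ is an arc of a hyperbola $xy=\pm\delta\epsilon_i$ lying in one quadrant and joining two adjacent sides of $\p U_\delta$ at points with $|x|$ or $|y|$ equal to $\epsilon_i<\epsilon$, so $B_i$ cuts off from $U_\delta$ a ``corner region'' $T_i$ around one vertex, with $z_1\notin\overline{T_i}$. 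To each $L_i$ I would then apply Theorem~\ref{t:Moser} with a tubular neighbourhood $U^{(i)}$ of $L_i$ chosen so thin that $\overline{U^{(i)}}\cap U_\delta$ lies in a preassigned small neighbourhood of $B_i$; from the proof of Theorem~\ref{t:Moser} one obtains, for $s$ below some $\tilde s_i>0$, a $\varphi_{0,2\pi,s}$-invariant circle $\beta^{(i)}_s\subset\overline{U^{(i)}}$ on the $z_1$-side of $L_i$ (one of the two invariant circles constructed there), whose suspension $\mathcal T^{(i)}_s:=\bigcup_{t\in\T}\varphi_{0,t,s}(\beta^{(i)}_s)\times\{t\}\subset\overline{U^{(i)}}\times\T$ is a $2$-torus invariant under the suspension flow of \eqref{e:nonaut}. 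I set $s_*:=\min_i\tilde s_i$ (to be decreased once more below) and fix $\delta'<\epsilon$ so small that $U_{\delta'}$ is disjoint from $\bigcup_i\overline{U^{(i)}}$ and lies, inside $U_\delta$, on the $z_1$-side of every $\beta^{(i)}_s$; this is possible uniformly in $s<s_*$ since the tori $\mathcal T^{(i)}_s$ remain within a fixed distance of $B_i$, which is bounded away from $z_1$.

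For the confinement step, let $z$ solve \eqref{e:nonaut} with $z(t_0)\in U_{\delta'}$, and let $(t_-,t_+)\ni t_0$ be the connected component of $\{t:z(t)\in U_\delta\}$, so $z(t_\pm)\in\p U_\delta$ whenever $t_\pm$ is finite. Since $z(t_0)\notin\overline{U^{(i)}}$, the suspension orbit $t\mapsto(z(t),t)$ starts off the invariant torus $\mathcal T^{(i)}_s$ and hence stays in its (invariant) complement; in particular $z(t)$ never lies on the time-$t$ slice $\beta^{(i)}_{s,t}:=\varphi_{0,t,s}(\beta^{(i)}_s)$ of $\mathcal T^{(i)}_s$. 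For $s$ small $\beta^{(i)}_{s,t}\cap U_\delta$ is a $C^0$-perturbation of $L_i\cap U_\delta$ — which is $B_i$ together with at most one further hyperbola arc, in case $L_i$ re-enters $U_\delta$ — hence embedded arcs splitting $U_\delta$ into a component $U_\delta^{z_1,i,t}$ containing $z_1$ and $U_{\delta'}$ and a component containing $T_i$. The set $\{t\in(t_-,t_+):z(t)\in U_\delta^{z_1,i,t}\}$ is nonempty, open, and has open complement (since $\beta^{(i)}_{s,t}$ depends continuously on $t$ and $z(t)\notin\beta^{(i)}_{s,t}$), so by connectedness it equals $(t_-,t_+)$. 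Intersecting over $i$, $z(t)$ lies for all $t\in(t_-,t_+)$ in the plus-shaped region $G_t:=\bigcap_i U_\delta^{z_1,i,t}$, whose closure meets $\p U_\delta$ only in four channels near the ends of the coordinate axes; since every hyperbola arc involved hits $\p U_\delta$ at points with $|x|$ or $|y|$ below $\epsilon$, this gives $\overline{G_t}\cap\p U_\delta\subset A^+_{\delta,\epsilon}\cup A^-_{\delta,\epsilon}$, with the part on $\{|x|=\delta\}$ contained in $A^+_{\delta,\epsilon}$ and the part on $\{|y|=\delta\}$ in $A^-_{\delta,\epsilon}$.

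It remains to determine the side of escape. Writing $X_{s,t}=s^kX_H+s^lX_{R_s(\cdot,t)}$ and using $H=c+xy$, one checks that $X_{s,t}$ is transverse to $\p U_\delta$ along $A^+_{\delta,\epsilon}\cup A^-_{\delta,\epsilon}$ — pointing out of $U_\delta$ along $A^+_{\delta,\epsilon}$ and into $U_\delta$ along $A^-_{\delta,\epsilon}$ — for every $t$ and all sufficiently small $s$: on these sets $|x|$ or $|y|$ equals $\delta$, so the normal component of $X_H$ there has absolute value $\delta$, which the perturbation of relative size $O(s^{l-k})$ cannot overturn. (Equivalently, the coordinate axes are the separatrices of $z_1$ and $A^+_{\delta,\epsilon}$, resp.\ $A^-_{\delta,\epsilon}$, is the part of $\p U_\delta$ near the ends of the unstable, resp.\ stable, one.) Hence a solution cannot exit $U_\delta$ through $A^-_{\delta,\epsilon}$ in forward time, nor through $A^+_{\delta,\epsilon}$ in backward time. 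Combining with the confinement step: if $t_+<+\infty$ then $z(t_+)\in\overline{G_t}\cap\p U_\delta\subset A^+_{\delta,\epsilon}\cup A^-_{\delta,\epsilon}$, and because $X_{s,t_+}$ points into $U_\delta$ along $A^-_{\delta,\epsilon}$ the maximality of $t_+$ forces $z(t_+)\in A^+_{\delta,\epsilon}$; symmetrically $z(t_-)\in A^-_{\delta,\epsilon}$ when $t_->-\infty$. This is the assertion, with $\delta'$ and $s_*$ as constructed.

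The step I expect to be the main obstacle is the confinement step: the circles $L_i$ are global objects that may travel far from $z_1$ and re-enter $U_\delta$, so one has to argue carefully that the barriers $\mathcal T^{(i)}_s$ nevertheless trap $z$ inside $G_t\subset U_\delta$ and that the only admissible exits through $\p U_\delta$ are squeezed into $A^+_{\delta,\epsilon}\cup A^-_{\delta,\epsilon}$. By contrast the direction step is painless, and I would emphasise why: it is detected only on $A^+_{\delta,\epsilon}\cup A^-_{\delta,\epsilon}$, which sit a fixed distance away from $z_1$, so no control of the flow in the hyperbolic region near the saddle is needed.
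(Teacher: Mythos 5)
Your proof is correct and follows essentially the same route as the paper: obtain the four non-resonant circles from Theorem~\ref{t:ultimo}, apply Theorem~\ref{t:Moser} to turn them into impenetrable barriers for the non-autonomous flow, trap the orbit in the component of $U_\delta$ minus the barriers containing $z_1$, and then use transversality of $X_{s,t}$ along $A^\pm_{\delta,\epsilon}$ to separate exit from entrance. The paper's own proof sidesteps the time-dependence you carefully track (by using the \emph{static} trapping neighborhood $U'_i$ from Theorem~\ref{t:Moser}(2) directly, rather than its time-$t$ slices) and is thereby shorter, but you arrive at the same region and the same conclusion.
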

\begin{proof}
Given $\epsilon$ and $\delta$, Theorem~\ref{t:ultimo} yields the sets $B_1$, $B_2$, $B_3$, $B_4$ which are parts of non-resonant circles. Applying Theorem~\ref{t:Moser} to these non-resonant circles we obtain a value $s_*$ and $\delta'<\epsilon$, and tubular neighborhoods of these circles which cannot be intersected by solutions $z$ of \eqref{e:nonaut} with parameter $s<s_*$ such that $z(t_0)\in U_{\delta'}$ for some $t_0$. Let $V$ be the union of these tubular neighborhoods and let $V'$ the connected components of $U_\delta\setminus V$ containing $U_{\delta'}$. Then
\[
\p V'\cap \p U_\delta\subset A^-_{\delta,\epsilon}\cup A^+_{\delta,\epsilon}. 
\]
Thus either $z$ stays in $U_\delta$ for all times larger than $t_0$ or it must have a first intersection time larger than $t_0$ with $\p U_\delta$ at $A^-_{\delta,\epsilon}\cup A^+_{\delta,\epsilon}$. Notice however that $X_{t,s}$ points inside $U_\delta$ on $A^-_{\delta,\epsilon}$ and outside of it on $A^+_{\delta,\epsilon}$ if $s$ is small enough. Therefore $z$ can only exit $U_\delta$ at $A^+_{\delta,\epsilon}$ and enter at $A^-_{\delta,\epsilon}$.  
\end{proof}
\section{Applications to magnetic systems via the Hamiltonian normal form}\label{s:app}
In this section we see how the normal form established in \cite{Castilho:2001,AB21} allows us to apply the results of the previous section to magnetic systems at low speeds and prove Theorems~\ref{t:main} and~\ref{t:main2}.

Let $(g,b)$ be a magnetic system on a closed, connected, oriented surface $M$. Equation~\eqref{e:ODE} induces a flow on $TM$, usually referred to as the \textit{magnetic flow}, which preserves the set of vectors with fixed speed $S_s M := \{(q,v)\in TM\ |\ |v|_q=s\}$, where $|\cdot|$ is the norm associated with $g$. We recall now that the flow on $TM$ is Hamiltonian. To this purpose let $\lambda\in \Omega^1(TM)$ be the Hilbert form of $g$, namely the pull-back of the standard Liouville one-form on $T^*M$ by means of the isomorphism $TM\to T^*M$ induced by the metric. We define the \textit{twisted symplectic form}
\[
\omega_{(g,b)} := \diff \lambda - \pi^*(b\mu),
\]
where $\pi:TM\to M$ is the foot-point projection and $\mu$ is the area form induced by $g$ and the given orientation, and consider the kinetic Hamiltonian 
\[
H_\kin :TM \to \R,\qquad H_{\kin}(q,v) = \frac{1}{2}|v|^2_q.
\]
It is a classical fact that the Hamiltonian flow $\Phi_{(g,b)}:\R\times TM\to TM$ induced by the pair $(H_\kin,\omega_{(g,b)})$ is the magnetic flow of the pair $(g,b)$. This implies that the restriction of the magnetic flow to each $S_s M$ is everywhere tangent to the characteristic line distribution 
\[
\ker (\omega_{(g,b)}|_{S_s M})\subset T (S_s M).
\]
Setting $SM:=S_1M$ and defining the rescaling map $F_s:SM\to S_sM$, $F_s(q,v)=(q,sv)$, we get
\[
\omega_s:=F_s^*\omega_{(g,b)}|_{S_sM}=s\diff\lambda-\pi^*(b\mu),
\]
where on the right-hand side we have also denoted by $\lambda$ and $\pi^*(b\mu)$ the restrictions of these objects to $SM$. Therefore the map $F_s$ identifies integral curves of the characteristic distribution $\ker\omega_s$ with magnetic geodesics, namely solutions of \eqref{e:ODE}, up to time reparametrization.

Let $U\subset M$ be an open set on which $b$ does not vanish and such that $\pi:SU\to U$ admits
a section $W:U\to SU$. Thus $\omega:=b\mu$ is a symplectic form and $(U,b\mu)$ is a symplectic surface without boundary with finite area. Moreover the angular function $t:SU\to \T$ associated with $W$ yields a trivialization $\tau:U\times \T\to SU$. Following \cite[Theorem 1.1]{AB21} we see that for every open set $U'$ such that $\overline{U'}\subset U$, there exist $s_0>0$ and an isotopy of embeddings $\Psi_s:U'\times\T\to SU$, $s\in[0,s_0)$ with $\Psi_0=\tau|_{U'\times\T}$ such that
\[
\Psi_s^*\omega_s=\diff(H_s\diff t)-\pi^*(b\mu),
\]
where $H_s:U'\times\T\to\R$ is a path of functions such that
\begin{equation}\label{e:H1}
H_s(z,t)=-\frac{s^2}{2b(z)}+s^3R_s(z,t),\qquad \forall\,(z,t)\in U'\times\T
\end{equation}
for some remainder $R_s:U'\times\T$. If $b$ is constant, then we can choose $\Psi_s$ so that
\begin{equation}\label{e:H2}
H_s(z,t)=-\frac{s^2}{2b}-\frac{s^4}{(2b)^3}K(z)+s^5\tilde R_s(z,t),\qquad\forall\,(z,t)\in U'\times\T,
\end{equation}
where $K:M\to\R$ is the Gaussian curvature of $g$ and $\tilde R_s:U'\times\T\to\R$ is some remainder. A short computation shows that the curves $\Gamma:(t_0,t_1)\to U'\times\T$, $\Gamma(t)=(z(t),t)$ where $z$ is a solution of the non-autonomous Hamiltonian system with Hamiltonian $H_s$ and symplectic form $\omega=b\mu$ are exactly the integral curves of the characteristic foliation of $\ker(\diff(H_s\diff t)-\pi^*(b\mu))=\ker(\Psi_s^*\omega_s)$. 
\begin{rmk}\label{r:bb}
Up to a constant factor, the leading term in \eqref{e:H1} is given by the Hamiltonian $b^{-1}:U'\to\R$. The Hamiltonian flow of $b^{-1}$ with symplectic form $b\mu$ is the same as the Hamiltonian flow of $\tfrac12b^{-2}$ with symplectic form $\mu$. Thus non-resonant circles for $b^{-1}$ with symplectic form $b\mu$ are the same as non-resonant circles for $b^{-2}$ with symplectic form $\mu$. \qed
\end{rmk}
\begin{rmk}\label{r:psis}
If $t\mapsto z(t)$ is a periodic orbit for $H_s$, then $\gamma(t):=\pi(\Psi_s(z(t),t))$ is a periodic magnetic geodesic with speed $s$ freely homotopic to $z$ since $\pi\circ\Psi_0=\pi$. Moreover, if $U'$ is an open set with $\overline{U'}\subset U$ having the property that every solutions of the non-autonomous Hamiltonian system of $H_s$ passing through $U'$ stays in $U$ for all times, then up to slightly modifying $U$ and $U'$ the same holds for magnetic geodesics with small speed $s$ since $\Psi_s$ is close to the trivialization $\Psi_0=\tau$. \qed
\end{rmk}
We are in position to apply the results of Section~\ref{s:Moser} to the trajectories of $H_s$ and then transfer them to the solutions of \eqref{e:ODE} via the maps $F\circ\Psi_s$. 

\begin{proof}[Proof of Theorem~\ref{t:main}]
Let $(g,b)$ be a magnetic system which is non-resonant at zero speed. Let us assume first that $b$ is not constant and define the open set $N:=\{b\neq0\}$. The function $b^{-1}:N\to\R$ is proper and the symplectic area of $N$ with respect to $\omega=b\mu$ is finite. Then Theorem~\ref{t:dichotomy} implies that $b^{-1}$ has a non-resonant circle $L$: This is clear if $\{b=0\}$ is non-empty (as $N\neq S^2$ in this case) and it is true by Definition~\ref{d:non} together with Remark~\ref{r:bb} otherwise. By Theorem~\ref{t:Moser} applied to a neighborhood $U$ of the circle $L$, for $s$ small enough the function $H_s$ given in \eqref{e:H1} admits infinitely many periodic orbits freely homotopic to some multiple of $L$ and there is a neighborhood $U'$ of $L$ such that every orbit of $H_s$ starting in $U'$ stay in $U$ for all times. By Remark~\ref{r:psis} the same applies to magnetic geodesics for a small enough speed $s$. 

 If $b$ is a non-zero constant, then by Definition~\ref{d:non} and Theorem~\ref{t:dichotomy} the Gaussian curvature $K:M\to\R$ admits a non-resonant circle $L$ with respect to the symplectic form $\mu$. In this case the leading term of $H_s$ given in \eqref{e:H2} is equal to $K$ up to constants and we can run the same argument as in the case of a non-constant $b$ using Theorem~\ref{t:Moser} to finish the proof of Theorem~\ref{t:main}.
\end{proof}
\begin{proof}[Proof of Theorem~\ref{t:main2}]
Using Remark~\ref{r:bb} and~\ref{r:psis} the result follows from Theorem~\ref{t:three} applied to $H_s$ given in \eqref{e:H1}, \eqref{e:H2}. Indeed observe that if the set $\{b=0\}$ is non-empty, then the Hamiltonian 
$b^{-1}$ is proper and unbounded on the non-empty open set $\{b\neq0\}$.
\end{proof}
Using the notation of Theorem~\ref{t:saddle}, we get a result for saddle points of $b$ or $K$ as promised in Remark~\ref{r:saddle}.
\begin{thm}\label{t:saddlemag}
Let $z_1\in\{b\neq0\}$ be a non-degenerate saddle point of the function $-b^{-1}$. For every $\epsilon<\delta<\delta_0$ there exists $\delta'$ and $s_*$ such that for all $s<s_*$ every magnetic geodesic $\gamma$ with speed $s$ and $\gamma(t_0)\in U_{\delta'}$ for some $t_0\in \R$ has the following property: There exist $t_-\in[-\infty,t_0)$ and $t_+\in(t_0,+\infty]$ such that $\gamma(t)\in U_\delta$ for all $t\in(t_-,t_+)$. Moreover, either $t_\pm=\pm\infty$ or $\gamma(t_\pm)\in A^\pm_{\delta,\epsilon}$. A similar statement holds if $b$ is a non-zero constant and $z_1$ is a non-degenerate saddle point for $-K$.
\end{thm}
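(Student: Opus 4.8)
The plan is to derive Theorem~\ref{t:saddlemag} from Theorem~\ref{t:saddle} through the Hamiltonian normal form, just as Theorems~\ref{t:main} and~\ref{t:main2} were obtained from the abstract results of Section~\ref{s:Moser}. The autonomous model for the slow drift of magnetic geodesics near $z_1$ is, when $b$ is non-constant, the Hamiltonian $H:=-\tfrac1{2b}$ on the connected component $N$ of $\{b\neq0\}$ containing $z_1$, with symplectic form $\omega=b\mu$; when $b$ is a non-zero constant it is instead $H:=-\tfrac{K}{(2b)^3}$ on $N:=M$ with $\omega=b\mu$. In both cases $H$ is proper — because $|H|\to+\infty$ along sequences converging to $\{b=0\}$, or because $M$ is compact — and $(N,\omega)$ has finite symplectic area, since $|\int_N\omega|\le\int_M|b|\,\mu<\infty$. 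As $H$ is a non-zero constant multiple of $-b^{-1}$ (resp.~of $-K$), the hypothesis says exactly that $z_1$ is a non-degenerate saddle point of $H$, so Theorem~\ref{t:ultimo} applies and fixes around $z_1$ a positive chart $(x,y)$ with $H(x,y)=c+xy$ — the chart underlying the notation $U_\delta$, $U_{\delta'}$, $A^\pm_{\delta,\epsilon}$ of Theorem~\ref{t:saddle}.

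Fix $\epsilon<\delta<\delta_0$. Theorem~\ref{t:ultimo} provides arcs $B_1,\dots,B_4$ contained in non-resonant circles $L_1,\dots,L_4$ of $H$, and these are compact embedded circles in $N$ because $H$ is proper. I would then pick the domain $U'$ of the normal form \eqref{e:H1} (resp.~\eqref{e:H2}) to be a small relatively compact neighborhood of $L_1\cup\dots\cup L_4$ whose closure lies in an open set $U\subset\{b\neq0\}$ over which $\pi:SU\to U$ admits a section; such $U$ exists since $L_1\cup\dots\cup L_4\subset\{b\neq0\}$ is a one-dimensional submanifold, hence a proper closed subset of $M$, so $U$ can be chosen as a proper open subset of $M$, over which the oriented circle bundle $SU\to U$ is trivial. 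On $U'\times\T$, formula \eqref{e:H1} — resp.~\eqref{e:H2} after discarding the dynamically irrelevant constant term $-\tfrac{s^2}{2b}$ — identifies, via the embeddings $\Psi_s$, the integral curves of the characteristic foliation $\ker\omega_s$ with the solutions of the non-autonomous Hamiltonian system \eqref{e:nonaut} for $H_s=s^kH+s^lR_s$ with $(k,l)=(2,3)$ (resp.~$(4,5)$); this is precisely the set-up \eqref{e:Hs}.

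Now I would run the proof of Theorem~\ref{t:saddle} inside $U'$. Applying Theorem~\ref{t:Moser} to each non-resonant circle $L_i$ — keeping the invariant curves and the tubular neighborhoods it produces inside $U'$, where $H_s$ is defined — yields $s_*>0$, $\delta'<\epsilon$, and a union $V\subset U'$ of tubular neighborhoods of the $L_i$ which no solution $z$ of \eqref{e:nonaut} with $z(t_0)\in U_{\delta'}$ and $s<s_*$ can cross. As in Theorem~\ref{t:saddle}, the component $V'$ of $U_\delta\setminus V$ containing $U_{\delta'}$ satisfies $\partial V'\cap\partial U_\delta\subset A^-_{\delta,\epsilon}\cup A^+_{\delta,\epsilon}$, and since $X_{s,t}$ points into $U_\delta$ along $A^-_{\delta,\epsilon}$ and out of it along $A^+_{\delta,\epsilon}$ for $s$ small, every such $z$ stays in $U_\delta$ on a maximal interval $(t_-,t_+)$ with $t_\pm=\pm\infty$ or $z(t_\pm)\in A^\pm_{\delta,\epsilon}$. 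Finally Remark~\ref{r:psis} transfers this to magnetic geodesics: $\gamma(t)=\pi(\Psi_s(z(t),t))$ is a magnetic geodesic of speed $s$, and since $\Psi_s\to\Psi_0=\tau$ with $\pi\circ\tau=\mathrm{id}$ as $s\to0$, after shrinking $\delta'$ and slightly thickening the walls one gets the same trapping statement, with the same exit sets $A^\pm_{\delta,\epsilon}$, for every magnetic geodesic of sufficiently small speed. The case of a non-zero constant $b$ and a non-degenerate saddle $z_1$ of $-K$ is entirely analogous, using $H=-\tfrac{K}{(2b)^3}$ and \eqref{e:H2}.

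The step deserving the most care — rather than a genuine obstacle — is the interface between the local normal form and the global dynamics: \eqref{e:H1}--\eqref{e:H2} are only available on open sets with compact closure in $\{b\neq0\}$, whereas the non-resonant circles furnished by Theorem~\ref{t:ultimo} are whole connected components of level sets of $-b^{-1}$ (resp.~$-K$) and need not sit inside the chart $U_{\delta_0}$. One therefore first uses properness of $-b^{-1}$ (resp.~$-K$) to see that these circles are compact, and only then enlarges the normal-form domain $U'$ to contain them before invoking Theorem~\ref{t:Moser}.
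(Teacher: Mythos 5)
Your proposal is correct and follows essentially the same route as the paper: the paper's proof is a one‑liner invoking Theorem~\ref{t:saddle} applied to the normal form $H_s$ of \eqref{e:H1} or \eqref{e:H2}, transferred back to magnetic geodesics via Remarks~\ref{r:bb} and~\ref{r:psis}, and your argument simply unfolds that invocation, including the careful (and correct) point that the normal‑form domain $U'$ must be chosen large enough to contain the non‑resonant circles from Theorem~\ref{t:ultimo} together with the chart $U_\delta$.
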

\begin{proof}
Using Remark~\ref{r:bb} and~\ref{r:psis} the result follows from Theorem~\ref{t:saddle} applied to $H_s$ given in \eqref{e:H1}, \eqref{e:H2}.
\end{proof}


\section{The resonant case}\label{s:resonant}

In this last section, we collect some observations about magnetic systems $(g,b)$ on $S^2$ that are resonant at zero speed for which the main results in Section~\ref{ss:main} cannot be applied. Let us have a look first at the case in which $(g,b)$ is rotationally symmetric. This means that there are spherical coordinates $(r,\phi)\in(r_-,r_+)\times\T$ on the complement of the north and south pole such that $g=\diff r^2+a(r)^2\diff \phi^2$ and $b=b(r)$ for some functions $a,b:(r_-,r_+)\to\R$. A closer inspection of the proof of \eqref{e:H1} and \eqref{e:H2} given in \cite[Section 3]{AB21} shows that $H_s$ is independent of $\phi$ in this case. Thus we have the formula
\begin{equation}\label{e:xst}
X_{s,t}=\frac{\partial_r H_s}{ab}\partial_\phi
\end{equation}
for the time-dependent Hamiltonian vector field of $H_s$. Therefore the level sets of the function $r$ with $b(r)\neq0$ are invariant under the flow of $H_s$ and using the isotopy $\Psi_s$ we find that magnetic geodesics with speed $s$ are trapped around each of these circles. By \eqref{e:xst} and \eqref{e:H1}, the dynamics of $H_s$ on such circles over an interval of time of length $2\pi$ is given by a rotation of angle 
\[
\rho_s(r):=-s^2\pi \frac{b'(r)}{a(r)b^3(r)}+O(s^3).
\]
Therefore, if $b'(r)\neq0$, the function $s\mapsto \rho_s(r)$ is non-constant for $s$ small. Thus there is a decreasing sequence $s_n$ converging to $0$ such that $\rho_{s_n}(r)$ is rational. By continuity $\rho_{s_n}(r_m)$ is rational for infinitely many values $(r_m)_{m\in\N}$ close to $r$. In particular, there are infinitely many magnetic geodesics with speed $s_n$ for all $n\in\N$. 

In the complementary case in which $b$ is a non-zero constant, we can show that the curvature $K$ cannot yield a non-constant circle action. 
\begin{thm}
\label{thm:curvaturenonresonant}
Let $g=\diff r^2+a(r)^2\diff\phi^2$ be a metric of revolution on $S^2$ where $a:[0,R]\to[0,\infty)$ with non-constant Gaussian curvature $K$. Then $K$ does not induce a Hamiltonian circle action on $(S^2,\mu)$.
\end{thm}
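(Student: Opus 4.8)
The plan is to argue by contradiction: assume $K$ is non-constant but its Hamiltonian flow on $(S^2,\mu)$, with $\mu = a(r)\,\diff r\wedge\diff\phi$, is a circle action. First I would pin down the structure of $K$. The fixed-point set of a non-trivial Hamiltonian circle action on $S^2$ is finite (the action is conjugate to a rotation) and coincides with the critical set of $K$; but a smooth rotationally symmetric function has the whole circle $\{r=r_*\}$ among its critical points whenever $K'(r_*)=0$ with $r_*\in(0,R)$, so $K$ can have no critical point in the open interval. Since the two poles $r=0,\,R$ are always critical, $K$ is strictly monotone on $(0,R)$; after possibly reversing $r$ we may assume $K'>0$ there. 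A short computation gives the Hamiltonian vector field $X_K=\tfrac{K'(r)}{a(r)}\,\partial_\phi$, whose time-$t$ flow rotates each circle $\{r=\mathrm{const}\}$ by $t\,K'(r)/a(r)$; periodicity with a common period $2\pi/c$ therefore forces
\[
a(r)\;=\;\tfrac1c\,K'(r),\qquad c>0,\qquad r\in[0,R].
\]

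Next I would feed this into the well-known identity $K=-a''/a$ for the Gaussian curvature of a surface of revolution, i.e. $a''=-K a$. Substituting $a=K'/c$ turns it into the third-order ODE $K'''=-K K'$, which integrates twice. Using the pole conditions $K'(0)=0$ and $a'(0)=1$ (the latter yielding $K''(0)=c\,a'(0)=c$) one gets $K''=c+\tfrac12 K_0^2-\tfrac12 K^2$, where $K_0:=K(0)$, and then a first integral $(K')^2=P(K)$ with $P$ a cubic in $K$ having \emph{no quadratic term}. Since $K'$ vanishes at both poles, $K_0$ and $K_1:=K(R)$ are roots of $P$, and the vanishing quadratic coefficient forces the three roots to sum to zero, so the third root is $-(K_0+K_1)$. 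Thus
\[
a(r)^2\;=\;\tfrac1{c^2}P\big(K(r)\big)\;=\;\tfrac1{3c^2}\big(K-K_0\big)\big(K_1-K\big)\big(K+K_0+K_1\big),
\]
and comparing leading terms (or evaluating $P'(K_0)=2c$) fixes $c=\tfrac16(K_1-K_0)(K_1+2K_0)$.

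Finally I would extract $a'$ at the two poles. Since $2aa'=\tfrac{\diff(a^2)}{\diff K}\,K'=c\,\tfrac{\diff(a^2)}{\diff K}\,a$, we have $a'=\tfrac c2\,\tfrac{\diff(a^2)}{\diff K}$ wherever $a\neq0$, hence also at the poles by continuity. Plugging $K=K_0$ into the cubic gives $a'(0)=1$ identically — a reassuring consistency check — whereas $K=K_1$ gives
\[
a'(R)\;=\;-\,\frac{2K_1+K_0}{K_1+2K_0}.
\]
Smoothness of the metric at $r=R$ demands $a'(R)=-1$, which forces $2K_1+K_0=K_1+2K_0$, i.e. $K_1=K_0$; together with monotonicity this makes $K$ constant, contradicting the hypothesis. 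I expect no genuinely hard step — the argument is a self-contained ODE computation — and the only point requiring care is the bookkeeping of the four pole-regularity conditions ($a(0)=a(R)=0$, $a'(0)=1$, $a'(R)=-1$, $K'(0)=K'(R)=0$): one must spend the conditions at $r=0$ to fix the two constants of integration, so that the \emph{leftover} condition $a'(R)=-1$ is what collapses the system. The reduction to ``$K$ strictly monotone'' is the other place to be careful, since it rests on the fixed-point structure of circle actions on $S^2$.
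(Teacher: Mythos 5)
Your argument is correct and is essentially the paper's proof, just written in the variable $K$ rather than the area coordinate $A$ (the primitive of $a$): the relation $a=K'/c$ is the same as the paper's $K=c_1A+c_2$ after differentiation, both integrate the resulting ODE twice to a cubic first integral, and both exploit the four pole-regularity conditions to force the contradiction. Your factored form of the cubic and the computation of $a'(R)=-(2K_1+K_0)/(K_1+2K_0)$ is a clean way to package the paper's final boundary-condition bookkeeping, but it is the same underlying calculation.
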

\begin{proof}
Up to rescaling $g$ we can assume that $\mathrm{area}_g(S^2)=4\pi$. Let $A:[0,R]\to\R$ be the primitive of $a$ with the property that $A(0)=-1$ and $A(R)=1$. Since $r=0,R$ correspond to the south and north pole of $S^2$, we also have $A'(0)=0=A'(R)$ and $A''(0)=1$, $A''(R)=-1$.

We have $\mu=a\diff r\wedge\diff\phi=\diff A\wedge\diff\phi$. Therefore, $K$ induces a non-constant Hamiltonian circle action if and only if
\[
K=c_1A+c_2
\]
for some constants $c_1,c_2\in\R$ with $c_1\neq0$. Since $K=-(A')^{-1}A'''$, we can integrate this equation to
\begin{equation}\label{e:c1}
2c_1A''+(c_1A+c_2)^2+c_3=0
\end{equation}
for some constant $c_3$. Using the boundary conditions on $A$ and $A''$ we get
\[
2c_1+(c_2-c_1)^2+c_3=0,\quad -2c_1+(c_2+c_1)^2+c_3=0.
\]
Eliminating $c_3$, we find $4c_1=(c_2+c_1)^2-(c_2-c_1)^2$. Since $c_1\neq0$, this is equivalent to $c_2=1$. Consequently $c_3=-1-c_1^2$. Substituting $c_2$ and $c_3$ in \eqref{e:c1} and dividing by $c_1$, we arrive at
\[
2A''+c_1A^2+2A-c_1=0.
\]
Multiplying this equation by $A'$ and integrating once again, we find
\[
(A')^2+\tfrac13c_1A^3+A^2-c_1A=c_4.
\]
Using the boundary conditions for $A$ and $A'$, we get
\[
\tfrac13c_1+1-c_1=-\tfrac13c_1+1+c_1,
\]
which is equivalent to $c_1=0$, a contradiction.
\end{proof}
\begin{cor}\label{c:rotinf}
Let $(g,b)$ be a magnetic system on $S^2$ such that $g$ is rotationally symmetric and $b$ is a non-zero constant. Then for every $s$ small enough there are infinitely many periodic magnetic geodesics of speed $s$.\qed
\end{cor}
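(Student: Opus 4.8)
The plan is to split the argument according to whether the Gaussian curvature $K$ of $g$ is constant, since exactly one of these two cases produces a magnetic system that fails to be resonant at zero speed. \textbf{First, suppose $K$ is non-constant.} I would check directly from Definition~\ref{d:non} that $(g,b)$ is \emph{not} resonant at zero speed. Indeed, $b$ is a non-zero constant, so $\{b=0\}=\varnothing$ and $b$ is not a non-constant function; hence alternative~(1) of the definition fails because $K$ is non-constant, alternative~(2)(i) fails because $b$ is constant, and alternative~(2)(ii) would require the Hamiltonian flow of $K$ on $(S^2,\mu)$ to be fully periodic, which is exactly what Theorem~\ref{thm:curvaturenonresonant} excludes (its hypothesis that $g$ be a metric of revolution is satisfied, as $g$ is rotationally symmetric). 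Therefore $(g,b)$ is non-resonant at zero speed and Theorem~\ref{t:main} applies (note that Corollary~\ref{c:nons2} is not directly applicable, since $M=S^2$ and $b$ vanishes nowhere), giving an embedded circle $L\subset\{b\neq0\}$ and a threshold $s_*>0$ such that for every $s\in(0,s_*)$ there are infinitely many periodic magnetic geodesics of speed $s$, as desired.

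\textbf{Now suppose $K$ is constant.} A smooth rotationally symmetric metric on $S^2$, written $g=\diff r^2+a(r)^2\diff\phi^2$ with $a(0)=a(R)=0$ and $a'(0)=1=-a'(R)$, satisfies $K=-a''/a$; constancy of $K$, together with $\int_{S^2}K\,\mu=4\pi>0$, forces $K>0$ and then $a(r)=K^{-1/2}\sin(K^{1/2}r)$ with $R=\pi K^{-1/2}$, so that $g$ is the round metric of radius $K^{-1/2}$. Since $b$ is a non-zero constant, by~\eqref{e:kappa} every magnetic geodesic of speed $s$ has constant geodesic curvature $b/s$ and is therefore a round small circle on this sphere, in particular closed. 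Hence for every speed $s$ all magnetic geodesics are periodic, and there are obviously infinitely many of each speed. (Alternatively one may invoke the fact recalled in the introduction that all magnetic geodesics are periodic for small $s$ whenever $K$ and $b\not\equiv0$ are constant.) Combining the two cases yields the corollary.

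I do not expect a genuine obstacle here: the two non-trivial ingredients, Theorem~\ref{thm:curvaturenonresonant} and Theorem~\ref{t:main}, are already established, and the only step requiring care is the bookkeeping against Definition~\ref{d:non} in the first case — in particular, noting that on $S^2$ one cannot certify non-resonance from the topology of $M$ alone (as one can when $M\not\cong S^2$), so the curvature genuinely must be shown not to generate a circle action, which is precisely the content of Theorem~\ref{thm:curvaturenonresonant}.
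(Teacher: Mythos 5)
Your proposal is correct and follows essentially the same route as the paper's (implicit) argument: when $K$ is non-constant, Theorem~\ref{thm:curvaturenonresonant} rules out case (2)(ii) of Definition~\ref{d:non}, so $(g,b)$ is non-resonant at zero speed and Theorem~\ref{t:main} applies; when $K$ is constant, $g$ is round and every magnetic geodesic has constant geodesic curvature, hence is a closed small circle, as the paper already notes in Section~\ref{ss:main}. Your explicit computation of $a(r)$ in the constant-curvature case is a useful but optional elaboration of that remark.
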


Prompted by these results for rotationally symmetric magnetic systems we can formulate the following conjecture for arbitrary resonant magnetic systems.

\begin{conj*}
Let $(g,b)$ be a magnetic system on $S^2$ which is resonant at zero speed. Then there exists a sequence $s_n\to 0$ such that there are infinitely many periodic magnetic geodesics with speed $s_n$ for all $n\in\N$. 
\end{conj*}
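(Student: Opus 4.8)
The plan is to split the argument according to the two cases of Definition~\ref{d:non}. In case~(1), where $b$ and $K$ are both constant and $b\neq0$, the metric $g$ is, up to rescaling, a round sphere and every curve of constant geodesic curvature on it is a circle; by~\eqref{e:kappa} all magnetic geodesics are then closed, so there are already infinitely many periodic magnetic geodesics for \emph{every} speed $s>0$ and nothing is to be proven. Assume therefore that we are in case~(2): $M=S^2$ and either $b$ is non-constant and nowhere vanishing with $b^{-2}$ inducing a fully periodic flow on $(S^2,\mu)$, or $b$ is a non-zero constant and $K$ is non-constant with $K$ inducing a fully periodic flow on $(S^2,\mu)$. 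In both cases the normal form~\eqref{e:H1}, resp.~\eqref{e:H2}, on the symplectic surface $(\{b\neq0\},b\mu)=(S^2,b\mu)$ reads, after discarding in the second case the $s$-dependent additive constant (which does not affect the dynamics), $H_s=s^k\big(H+sR_s(\,\cdot\,,\,\cdot\,,t)\big)$ with $k=2$, resp.\ $k=4$, where $H$ equals $-\tfrac1{2b}$, resp.\ $-\tfrac{K}{(2b)^3}$. By Definition~\ref{d:non} and Remark~\ref{r:bb} the Hamiltonian flow of $H$ on $(S^2,b\mu)$ is fully periodic, hence a circle action with exactly two non-degenerate fixed points $z_\pm$; on $S^2\setminus\{z_\pm\}$ we then have action--angle coordinates $(I,\theta)\in(I_-,I_+)\times\T$ with $\omega=\diff I\wedge\diff\theta$ and $H=I$, so that $H_s=s^k\big(I+sR_s(I,\theta,t)\big)$.

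The strategy is then modelled on the rotationally symmetric case treated before Corollary~\ref{c:rotinf}, where this scheme does succeed: there the circles $\{r=\mathrm{const}\}$ are \emph{exactly} invariant under the time-$2\pi$ map, the dynamics on each of them is the rigid rotation of angle $\rho_s(r)=-s^2\pi\,b'(r)/(a(r)b^3(r))+O(s^3)$, and the non-constancy of $r\mapsto\rho_s(r)$, which holds as soon as $b'\not\equiv0$, produces infinitely many circles of rational rotation number and hence infinitely many periodic magnetic geodesics for a sequence $s_n\to0$. In general one would first analyse the return map $\varphi_s:=\varphi_{0,2\pi,s}$ of the non-autonomous system generated by $H_s$: rescaling time by $s^k$ shows that on every compact subcylinder $\varphi_s$ is $C^\infty$-close to the rigid rotation $\Phi_H^{2\pi s^k}$ of the action coordinate, with correction of size $O(s^{k+1})$, and a first-order averaging over $t$ and over the angle $\theta$ gives that the rotation behaviour of $\varphi_s$ near $\{I=c\}$ is governed by $\varrho_s(c)=s^k+s^{k+1}\,\overline{\partial_I R}(c)+o(s^{k+1})$, the analogue of $\rho_s$. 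Along a sequence $s_n\to0$ with $s_n^k$ rational, say $s_n^k=p_n/q_n$, the $q_n$-th iterate of $\Phi_H^{2\pi s_n^k}$ is the identity, so $\varphi_{s_n}^{q_n}$ is a small exact-symplectic perturbation of the identity on a large annulus; provided $c\mapsto\varrho_{s_n}(c)$, or its refinement by higher-order averaging, is non-constant there, one argues as in the proof of Theorem~\ref{t:Moser}: produce $\varphi_{s_n}^{q_n}$-invariant curves with distinct rotation numbers via the Moser twist theorem applied to the (now genuinely twisting) averaged normal form, then invoke the Poincar\'e--Birkhoff theorem to obtain one periodic point for each intermediate rational. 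Infinitely many of these have distinct primitive periods, hence give infinitely many periodic orbits of $H_{s_n}$, and then, by Remark~\ref{r:psis}, infinitely many periodic magnetic geodesics of speed $s_n$.

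\textbf{The main obstacle} is precisely the clause ``provided $c\mapsto\varrho_{s_n}(c)$, or its refinement, is non-constant''. Without the rotational symmetry the circles $\{I=c\}$ are no longer $\varphi_s$-invariant, so the twist has to be extracted from the averaged normal form of $\varphi_s$ at some order in $s$, and a priori it could happen that all these averaged corrections vanish or are affine in $I$ --- equivalently, that $\varphi_s$ is $C^\infty$-conjugate to a rigid rotation and has no periodic points at all. Ruling this out along some sequence $s_n\to0$ amounts to a non-degeneracy statement about the remainder $R_s$ of the normal forms of \cite{Castilho:2001,AB21}, beyond their leading terms, which does not seem to be available at present; note that such total integrability \emph{can} occur at individual speeds, since \cite{Benedetti:2014,Benedetti:2016} exhibit magnetic systems on $S^2$ with exactly two periodic orbits at a single speed, so a sequence is the best one can hope for. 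A secondary, more technical difficulty is that the twist has size $O(s^{k+1})$ while the underlying rotation $s^k$ is itself small and $q_n\to\infty$, so the Moser and Poincar\'e--Birkhoff steps must be run with estimates uniform in $n$; this looks delicate but, unlike the non-degeneracy, within reach. Finally, the variational alternative --- perturbing the Morse--Bott free-period action functional whose critical manifold at a resonant parameter is a copy of $S^2$ --- runs into the same wall, since Lusternik--Schnirelmann theory on $S^2$ produces only two critical points and distinguishing infinitely many primitive periods again requires a genuine twist.
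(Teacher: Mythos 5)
This statement is a \emph{conjecture} in the paper, not a theorem: the authors explicitly present it as an open problem motivated by the rotationally symmetric case, and offer no proof. So there is nothing for your argument to match against, and indeed you have not produced a proof either --- you openly flag in your ``main obstacle'' paragraph that the argument does not close. What you have written is a careful analysis of why the conjecture is hard, and that analysis is essentially correct and in the spirit of the paper.

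A few specific remarks. Your treatment of the first resonant subcase ($b$ and $K$ both constant on $S^2$) is fine: $g$ is then the round metric up to scaling, the constant geodesic curvature $\kappa=b/s$ forces every magnetic geodesic to be a Euclidean circle on the sphere, so all orbits are closed at every speed. In the second subcase, your reduction via the normal form \eqref{e:H1}, \eqref{e:H2} to a perturbation $H_s=s^k(I+sR_s)$ of a Hamiltonian circle action in action--angle coordinates is the right setup, and your identification of the obstruction --- that without rotational symmetry one must extract twist from the averaged normal form of $\varphi_s$, and nothing currently rules out that all averaged corrections vanish (i.e.\ that $\varphi_s$ is conjugate to a rigid rotation) --- is exactly the reason the authors could only state this as a conjecture. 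The paper's own evidence for the conjecture is precisely the rotationally symmetric computation preceding Corollary~\ref{c:rotinf} and Theorem~\ref{thm:curvaturenonresonant}, where the invariant circles and the explicit formula $\rho_s(r)=-s^2\pi b'(r)/(a(r)b^3(r))+O(s^3)$ make the non-degeneracy manifest; your proposal correctly observes that this is the special feature that does not survive in general. Your secondary technical concern (running Moser and Poincar\'e--Birkhoff uniformly in $n$ while $q_n\to\infty$ and the twist has size $O(s^{k+1})$) is also a real issue, though subordinate to the non-degeneracy problem. In short: you have correctly diagnosed why this is a conjecture rather than a theorem, and your sketch is a reasonable account of what a proof would have to provide, but it is not a proof, and the paper does not contain one.
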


Finally, we can ask what geometric conditions general magnetic systems $(g,b)$ on $S^2$ must satisfy in order to be non-resonant at zero speed. Surely, given $g$ one readily finds a positive, non-constant function $b$ such that $b^{-2}$ yields a Hamiltonian circle action on $(S^2,\mu)$. Thus we consider the other case in which $b$ is constant and $g$ is a metric whose Gaussian curvature $K$ yields a non-constant Hamiltonian circle action on $(S^2,\mu)$. Then $K$ is a Morse function with exactly one minimum and one maximum and by Lemma~\ref{l:ap}
\begin{equation}\label{e:muk}
\mu=\diff K\wedge \diff\phi\qquad \phi\in\R/T\Z,
\end{equation}
where
\[
T:=\frac{\mathrm{area}_g(S^2)}{\max K-\min K}.
\]
Integrating \eqref{e:muk} on $\{K\leq k\}$ for an arbitrary $k$ we get 
\begin{equation}\label{e:Kres0}
k=\min K+\frac{\max K-\min K}{\mathrm{area}_g(S^2)}\mathrm{area}_g\big(\{K\leq k\}\big),\qquad \forall\,k\in[\min K,\max K].
\end{equation}
Moreover, applying the Gauß--Bonnet theorem we see that 
\[
4\pi=\int_{S^2}K\mu=\int_{\min K}^{\max K}\int_0^TK\diff K\,\diff\phi=\frac12\big((\max K)^2-(\min K)^2)T=\frac{\max K+\min K}{2}\mathrm{area}(S^2,g),
\]
which yields the following necessary condition for $K$ to induce a non-constant circle action
\begin{equation}\label{e:Kres}
\frac{\max K+\min K}{2}=	\frac{4\pi}{\mathrm{area}_g(S^2)}.
\end{equation}
\bibliography{_biblio}
\bibliographystyle{plain}

\end{document}